\documentclass[oneside,12pt]{amsart}
\usepackage{amsmath,amssymb,mathrsfs,stmaryrd}

\usepackage{color}
\usepackage[
color,
final]        
{showkeys}

\definecolor{refkeybis}{gray}{.65}
\definecolor{labelkeybis}{gray}{.65}
{\makeatletter
\def\SK@refcolor{\color{refkeybis}}%
\def\SK@labelcolor{\color{labelkeybis}}}

\calclayout
\setlength{\paperwidth}{8.5in}
\setlength{\paperheight}{11in}
\addtolength{\hoffset}{-1in}
\setlength{\oddsidemargin}{1in}
\setlength{\evensidemargin}{1in}
\setlength{\textwidth}{6.5in}
\headheight6.15pt 
\parskip=0pt plus 1pt 
%
\addtolength{\voffset}{-1in}
\setlength{\topmargin}{1in}
\setlength{\headheight}{2mm}
\setlength{\headsep}{8mm}
\setlength{\textheight}{\paperheight}
\addtolength{\textheight}{-\topmargin}
\addtolength{\textheight}{-\headheight}
\addtolength{\textheight}{-\headsep}
\addtolength{\textheight}{-1in}
%

%
%
\usepackage{latexsym}
\usepackage{amsmath}
\usepackage{amssymb,mathrsfs}
%
%



\newcommand{\N}{\mathbb{N}}

\newcommand{\R}{\mathbb{R}}
\newcommand{\Z}{\mathbb{Z}}

\newcommand{\BB}{\mathscr{B}}

\newcommand{\LL}{\mathscr{L}}
\newcommand{\MM}{\mathscr{M}}


\newcommand{\cM}{{\ensuremath{\mathcal M}}}


\renewcommand{\ss}{{\mbox{\boldmath$s$}}}

\newcommand{\xx}{{\mbox{\boldmath$x$}}}

\newcommand{\sss}{{\mbox{\scriptsize\boldmath$s$}}}

\newcommand{\sxx}{{\mbox{\scriptsize\boldmath$x$}}}








\newcommand{\sfd}{{\sf d}}
\newcommand{\sfe}{{\sf e}}

\newcommand{\sfi}{{\sf i}}

\newcommand{\sfp}{{\sf p}}

\newcommand{\sfV}{{\sf V}}

\newcommand{\BorelSets}[1]{\BB(#1)}

\newcommand{\vett}{\boldsymbol}
\newcommand{\XX}{{\vett X}}
\newcommand{\eps}{\varepsilon}

\newcommand{\boreliani}[1]{\mathscr B(#1)}

\newcommand{\tight}{\mathscr T}
\newcommand{\A}{\mathscr A}

\newcommand{\Prob}[2][]{{\mathscr P}_{#1}(#2)}
\newcommand{\optimalplan}[2]{\Gamma^\psi_o(#1,#2)}
\newcommand{\admissibleplan}[2]{\Gamma(#1,#2)}

\newcommand{\Leb}{\mathscr L}

\renewcommand{\div}{\mathop{\rm div}\nolimits}

\newcommand{\supp}{\mathop{\rm supp}}

\newcommand{\id}{\sfi}

\newcommand{\pV}{\sfp\sfV}
\newcommand{\eV}{\sfe\sfV}

%
%


\theoremstyle{plain}
\newtheorem{theorem}{Theorem}[section]  
\newtheorem{lemma}[theorem]{Lemma}

\newtheorem{proposition}[theorem]{Proposition}

\theoremstyle{remark}
\newtheorem{remark}[theorem]{Remark}


\begin{document}
\title{Absolutely continuous curves in extended Wasserstein-Orlicz spaces}
\author{Stefano Lisini}
\address{Dipartimento di Matematica ``F.Casorati'', Universit\`a degli Studi di Pavia, 27100 Pavia, Italy}
\email{stefano.lisini@unipv.it}

\subjclass[2010]{49J27, 49J52}
\keywords{Spaces of probability measures, Wasserstein-Orlicz distance, absolutely continuous curves, superposition principle, geodesic in spaces of probability measures}
\begin{abstract}
In this paper we extend a previous result of the author
\cite{lisini07} of characterization of absolutely continuous curves in Wasserstein spaces to a more general class of spaces:
the spaces of probability measures endowed with the Wasserstein-Orlicz distance constructed
on extended Polish spaces (in general non separable), recently considered in \cite{ags11}.
An application to the geodesics of this Wasserstein-Orlicz space is also given.
\end{abstract}
\maketitle
\section{Introduction}
In this paper we extend a previous result of the author \cite{lisini07} to a more general class of spaces.
The result in \cite{lisini07} concerns the representation of absolutely continuous curves with finite energy
in the Wasserstein space $(\Prob{X,\sfd},W_p)$
(the space of Borel probability measures on a Polish metric space $(X,\sfd)$, endowed with the $p$-Wasserstein distance induced by $\sfd$)
by means of superposition of curves of the same kind on the space $(X,\sfd)$.
The superposition is described by a probability measure on the space of continuous curves in $(X,\sfd)$
representing the curve in $(\Prob{X,\sfd},W_p)$ and satisfying a suitable property.

Here we extend the previous representation result in two directions:
in the first one we consider a so-called extended Polish space $(X,\tau,\sfd)$ instead of a Polish space $(X,\sfd)$;
in the second one we consider the $\psi$-Orlicz-Wasserstein distance induced by an increasing convex function $\psi:[0,+\infty)\to [0,+\infty]$ instead
of the $p$-Wasserstein distance modelled on the particular case of $\psi(r)=r^p$ for $p>1$.

The class of extended Polish spaces was introduced in the recent paper \cite{ags11}.
The authors consider a Polish space $(X,\tau)$,
i.e. $\tau$ is a separable topology on $X$ induced by a distance $\delta$ on $X$ such that $(X,\delta)$ is complete.
The Wasserstein distance is defined between Borel probability measures on $(X,\tau)$ and
constructed by means of an extended distance $\sfd$ on $X$ that can assume the value $+\infty$.
The minimization problem defining the extended Wasserstein distance makes sense between Borel probability measures on $(X,\tau)$,
assuming that the extended distance $\sfd$ is lower semi continuous with respect to $\tau$.

A typical example of extended Polish space is the abstract Wiener space $(X,\tau,\gamma)$ where  $(X,\tau)$ is a separale Banach space
and $\tau$ is the topology induced by the norm, $\gamma$ is a gaussian reference measure on $X$ with zero mean and supported on all the space.
The extended distance is given by $\sfd(x,y)=|x-y|_H$ if $x-y\in H$, where $H$ is the Cameron-Martin
space associated to $\gamma$ in $X$ and $|\cdot|_H$ is the Hilbertian norm of $H$, and $\sfd(x,y)=+\infty$ if $x-y\not\in H$
(see for instance \cite{Stroock11}).

The Wasserstein-Orlicz distance is still unexplored.
At the author's knowledge, only the papers \cite{Sturm4} and, more recently, \cite{Kuwada13} deal with this kind of spaces.
In the paper \cite[Remark 3.19]{FGY11}, the authors discuss the possibility to use this kind of Wasserstein-Orlicz distance
to extend their results for equation of the form $\partial_t u - \div(u\nabla H(u^{-1}\nabla u)=0$
to the case of a convex function $H$ with non power growth.

Only the particular case of the Wasserstein-Orlicz distance $W_\infty$,
corresponding to the function $\psi(s)=0$ if $s\in[0,1]$ and $\psi(s)=+\infty$ if $s\in(1,+\infty)$
has been deeply investigated.
The extension of the representation Theorem of \cite{lisini07} to the $W_\infty$ case has been proved in \cite{AmbrosioDiMarino12}.
Another refinement of the representation Theorem of \cite{lisini07} is contained in \cite[Sec. 5]{BS11}.
The problem of the validity of the representation Theorem of \cite{lisini07} in the case of a general Wasserstein-Orlicz space
is raised in the last section of \cite{ags13}.


For the precise statement of the result we address to Theorem \ref{th:curveAC}.
The strategy of the proof is similar to the one used to prove Theorem 5 of \cite{lisini07},
but there are several additional difficulties because $(X,\sfd)$, in general, is non separable and
the function $\psi$ that induces the Wasserstein-Orlicz distance is not homogeneous.

The paper is structured as follows:
in Section \ref{notation} we introduce the framework of our study and some preliminary results,
in Section \ref{main} we state and prove the main theorem of the paper, and
finally in Section \ref{sec:geodesics} we apply the main theorem in order to characterize the geodesics of the
Wasserstein-Orlicz space.


\section{Notation and preliminary results}\label{notation}
\subsection{Extended Polish spaces and probability measures}\label{subsec:PolishSpaces}
Given a set $X$, we say that $\sfd:X\times X \to [0,+\infty]$ is an extended distance if
\begin{itemize}
\item $\sfd(x,y)=\sfd(y,x)$ for every $x,y \in X$,
\item $\sfd(x,y)=0$ if and only if $x=y$,
\item $\sfd(x,y)\leq \sfd(x,z)+\sfd(z,y)$ for every $x,y,z \in X$.
\end{itemize}
$(X,\sfd))$ is called extended metric space.
We observe that the only difference between a distance and an extended distance is that $\sfd(x,y)$ could be equal to $+\infty$.

We say that $(X,\tau,\sfd)$ is a Polish extended space if:
\begin{itemize}
\item[(i)] $\tau$ is a topology on $X$ and $(X,\tau)$ is Polish, i.e. $\tau$ is induced by a distance $\delta$
    such that the metric space $(X,\delta)$ is separable and complete;
\item[(ii)] $\sfd$ is an extended distance on $X$ and $(X,\sfd)$ is a complete extended metric space;
\item[(iii)] For every sequence $\{x_n\}\subset X$ such that $\sfd(x_n,x)\to 0$ with $x\in X$, we have that $x_n\to x$ with respect to the
topology $\tau$;
\item[(iv)] $\sfd$ is lower semicontinuous in $X\times X$, with
respect to the $\tau\times\tau$ topology; i.e.,
\begin{equation}\label{lscd}
  \liminf_{n\to+\infty} \sfd(x_n,y_n)\geq\sfd(x,y), \qquad \forall (x,y)\in X\times X, \quad \forall (x_n,y_n)\to (x,y) \text{ w.r.t. } \tau\times\tau.
\end{equation}
\end{itemize}

In the sequel, the class of compact sets,
the class of Borel sets $\BorelSets{X}$, the class
$C_b(X)$ of bounded continuous functions and the class $\Prob{X}$ of
Borel probability measures, are always referred to the topology
$\tau$, even when $\sfd$ is a distance.

We say that a sequence $\mu_n \in \Prob{X}$ \emph{narrowly converges} to $\mu \in \Prob{X}$ if
\begin{equation}
	\lim_{n \rightarrow +\infty} \int_X \varphi (x)\,d\mu_n(x)=\int_X \varphi (x)\,d\mu(x)
			\qquad \forall \varphi \in C_b(X).
\end{equation}

It is well known that the narrow convergence is induced by a distance on $\Prob{X}$ (see for instance \cite[Remark 5.1.1]{ags})
and we call \emph{narrow topology} the topology induced by this distance.
In particular the compact subsets of $\Prob{X}$ coincides with
sequentially compact subsets of $\Prob{X}.$\\
We also recall that if $\mu_n \in \Prob{X}$ narrowly converges to $\mu \in \Prob{X}$ and $\varphi:X \to (-\infty,+\infty]$
is a lower semi continuous (with respect to $\tau$) function bounded from below, then
\begin{equation}\label{narrowlsc}
	\liminf_{n \rightarrow +\infty} \int_X \varphi (x)\, d\mu_n(x) \geq \int_X \varphi (x)\, d\mu(x) .
\end{equation}

A subset $\tight \subset \Prob{X}$ is said to be tight if
\begin{equation}\label{tightness}
	\forall \varepsilon >0 \quad \exists K_\varepsilon \subset X \mbox{ compact}:
	 \mu (X\setminus K_\varepsilon)<\varepsilon \quad \forall \mu \in \tight ,
\end{equation}
or, equivalently, if there exists a function $\varphi:X \rightarrow [0,+\infty]$ with compact sublevels
$\lambda_c(\varphi):=\{x \in X: \varphi(x)\leq c\}$,  such that
\begin{equation} \label{tightboundedness}
	\sup_{\mu \in \tight} \int_X\varphi(x) \, d\mu(x)<+\infty .
\end{equation}

By Prokhorov Theorem, a
set $\tight \subset \Prob{X}$ is tight if and only if $\tight$ is relatively compact in $\Prob{X}.$
In particular, the Polish condition on $\tau$ guarantees that all Borel probability
measures $\mu\in \Prob{X}$ are tight.

\subsection{Orlicz spaces}\label{subsec:OrliczSpaces}
Given
\begin{equation}\label{hp:psi}
\begin{aligned}
\psi:[0,+\infty)\to [0,+\infty] & \text{ convex, lower semicontinuous, non-decreasing, } \psi(0)=0,\\
&\quad \lim_{x\to +\infty}\psi(x) = +\infty,
\end{aligned}
\end{equation}
a measure space $(\Omega,\nu)$ and a $\nu$-measurable function $u:\Omega\to\R$,
the $L^\psi_\nu(\Omega)$ Orlicz norm of $u$ is defined by
$$\| u\|_{L^\psi_\nu(\Omega)}:=\inf \left\{\lambda>0: \int_\Omega \psi\Big(\frac{|u|}{\lambda}\Big)\,d\nu \leq 1\right\}.$$
The Orlicz space $L^\psi_\nu(\Omega):=\{u:\Omega \to \R, \text { measurable} : \|u\|_{L^\psi_\nu(\Omega)}<+\infty \}$
is a Banach space.
For the theory of the Orlicz spaces we refer to the complete monography \cite{RaoRen}.




%
Given a bounded sequence $\{w_n\}\subset L^\psi_\nu(\Omega)$, the following property of lower semi continuity of the norm holds:
\begin{equation}\label{lscOrliczNorm}
 \liminf_{n\to\infty} w_n(x) \geq w(x) \quad\text{for $\nu$-a.e. } x\in \Omega \quad
 \Longrightarrow \quad \liminf_{n\to\infty} \|w_n\|_{L_\nu^\psi(\Omega)} \geq \|w\|_{L_\nu^\psi(\Omega)}.
\end{equation}
Indeed, denoting by $\lambda_n:=\|w_n\|_{L_\nu^\psi(\Omega)}$ and $\lambda:=\liminf_n\lambda_n$, up to extract a subsequence we can assume that
$\lambda=\lim_n\lambda_n$. By the lower semicontinuity and the monotonicity of $\psi$ we have
\begin{equation*}
    \liminf_{n\to\infty}\psi\Big(\frac{w_n(x)}{\lambda_n}\Big) \geq \psi\Big(\frac{w(x)}{\lambda}\Big) \qquad\text{for $\nu$-a.e. }x\in \Omega.
\end{equation*}
Finally, by Fatou's lemma
$$ 1\geq \liminf_{n\to\infty}\int_\Omega\psi\Big(\frac{w_n(x)}{\lambda_n}\Big)\,d\nu(x) \geq \int_\Omega\psi\Big(\frac{w(x)}{\lambda}\Big)\,d\nu(x) $$
which shows that $\lambda \geq \|w\|_{L_\nu^\psi(\Omega)}$.

We denote by $\psi^*:=[0,+\infty)\to [0,+\infty]$ the conjugate of $\psi$ defined by $\psi^*(y)=\sup_{x\geq 0}\{xy-\psi(x)\}$.
The following generalized H\"older's inequality holds
\begin{equation}\label{holder}
 \int_\Omega u(x)v(x)\,d\nu(x) \leq 2\|u\|_{L_\nu^\psi(\Omega)}\|v\|_{L_\nu^{\psi^*}(\Omega)},
\end{equation}
and the following equivalence between the Orlicz norm in $L_\nu^\psi(\Omega)$ and the dual norm of $L_\nu^{\psi^*}(\Omega)$  holds
\begin{equation}\label{duality}
  \|u\|_{L_\nu^\psi(\Omega)} \leq \sup\Big\{\int_\Omega |u(x)v(x)| \,d\nu(x): v\in L_\nu^{\psi^*}(\Omega), \|v\|_{L_\nu^{\psi^*}(\Omega)}\leq 1  \Big\} \leq 2\|u\|_{L_\nu^\psi(\Omega)}.
\end{equation}

In the statement of our main theorem we will assume, in addition to \eqref{hp:psi},
that $\psi$ is superlinear at $+\infty$, i.e.
\begin{equation}\label{superlinearity}
\lim_{x\to +\infty}\frac{\psi(x)}{x} = +\infty,
\end{equation}
and it has null right derivative at $0$, i.e.
\begin{equation}\label{beatzero}
\lim_{x\to 0}\frac{\psi(x)}{x} = 0.
\end{equation}
It is easy to check that conditions \eqref{superlinearity} and \eqref{beatzero} are equivalent to assume that
$\psi^*(y)>0$ and $\psi^*(y)<+\infty$ for every $y>0$.

Typical examples of admissible $\psi$ satisfying \eqref{hp:psi}, \eqref{superlinearity} and \eqref{beatzero} are:
\begin{itemize}
\item $\psi(x)=x^p$ for $p\in(1,+\infty)$ and the corresponding Orlicz norm is the standard $L^p$ norm;
\item $\psi(x)=0$ if $x\in[0,1]$ and $\psi(x)=+\infty$ if $x\in(1,+\infty)$ and the corresponding Orlicz norm is the $L^\infty$ norm;
\item $\psi(x)=e^x-x-1$, exponential growth;
\item $\psi(x)=e^{x^p}-1$ for $p\in(1,+\infty)$, power exponential growth;
\item $\psi(x)=(1+x)\ln(1+x)-x$, $L\log L$-growth.
\end{itemize}

\subsection{Continuous curves}\label{subsec:ContCurves}
Given $(X,\tau,\sfd)$ an extended Polish space, $I:=[0,T]$, $T>0$, we denote by
$C(I;X)$ the space of continuous curves in $X$ with respect to the topology $\tau$.
$C(I;X)$ is a Polish space with the metric
\begin{equation}\label{distanceGamma}
	\delta_{\infty}(u, \tilde{u})=\sup_{t\in I}\delta(u(t), \tilde u(t)),
\end{equation}
where $\delta$ is a complete and separable metric on $X$ inducing $\tau$.

Given $\psi$ satisfying \eqref{hp:psi},
we say that a curve $u:I \rightarrow X$ belongs to $AC^\psi(I;(X,\sfd)),$ if there exists $m \in L^\psi(I)$ such that
\begin{equation}\label{AC}
	\sfd(u (s),u (t)) \leq \int _s^t m(r)\,dr \qquad \forall s,t \in I, \quad s \leq t.
\end{equation}
We also denote by $AC(I;(X,\sfd))$ the set $AC^\psi(I;(X,\sfd))$ for $\psi(r)=r$.
We call a curve $u \in AC^\psi(I;(X,\sfd))$ an
absolutely continuous curve with finite $L^\psi$-energy.

It can be proved that (see \cite[Theorem 1.1.2]{ags}) for every $u \in AC^\psi(I;(X,\sfd))$, there exists the following limit, called metric derivative,
	\begin{equation}
		|u'|(t):= \lim_{h \rightarrow 0} \frac{\sfd(u (t+h),u (t))}{|h|} \qquad \text{ for $\Leb ^1$-a.e. $t \in I$}.
	\end{equation}
The function $t \mapsto |u'|(t)$ belongs to $L^\psi(I)$ and it is the minimal one that satisfies \eqref{AC}.


The following Lemma will be useful in the proof of our main theorem.
\begin{lemma}\label{lemma:dq} Let $\psi$ be satisfying \eqref{hp:psi}, \eqref{superlinearity} and \eqref{beatzero}.
If $u:I\to (X,\sfd)$ is right continuous at every point and continuous except at most a countable set, and
\begin{equation}\label{eq:sobolev}
   \limsup_{h\to 0^+} \left\|\frac{\sfd(u(\cdot+h),u(\cdot))}{h}\right\|_{L^\psi(I)} <+\infty,
\end{equation}
where $u$ is extended for $t>T$ as $u(t)=u(T)$,
then $u \in AC^\psi(I;(X,\sfd))$.
\end{lemma}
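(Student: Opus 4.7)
The plan is to extract a weak limit $m\in L^\psi(I)$ of the difference quotients $m_h(t):=\sfd(u(t+h),u(t))/h$ (defined on $I$ using the stated extension $u(t)=u(T)$ for $t>T$) and then to derive the integral estimate $\sfd(u(a),u(b))\le\int_a^b m(r)\,dr$ for every $a\le b$ in $I$ via a partition-plus-averaging argument compatible with the mere right-continuity of $u$, thereby certifying $u\in AC^\psi(I;(X,\sfd))$.

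The hypothesis $\|m_h\|_{L^\psi(I)}\le C$ for $h$ small, combined with the superlinearity \eqref{superlinearity} of $\psi$, implies through the de la Vallée-Poussin criterion (applied to $\Phi(t):=\psi(t/C)$, for which $\int_I\Phi(m_h)\,dr\le 1$) that $\{m_h\}$ is uniformly integrable; Dunford-Pettis then yields a sequence $h_n\to 0^+$ and $m\in L^1(I)$ with $m_{h_n}\rightharpoonup m$ weakly in $L^1(I)$. Mazur's lemma produces convex combinations of $\{m_{h_n}\}$ converging pointwise a.e.\ to $m$, so the convexity of the Orlicz norm together with the pointwise lower semicontinuity \eqref{lscOrliczNorm} deliver $\|m\|_{L^\psi(I)}\le C$.

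Next, fix $a<b$ in $I$ and set $N:=N(h)=\lceil(b-a)/h\rceil$. For every $s\in[a,a+h]$ the triangle inequality gives
\[
    \sfd(u(s),u(s+Nh))\le\sum_{k=0}^{N-1}\sfd(u(s+kh),u(s+(k+1)h)).
\]
Averaging over $s\in[a,a+h]$ and performing the change of variable $r=s+kh$ in the $k$-th summand, whose image $[a+kh,a+(k+1)h]$ partitions $[a,a+Nh]$ as $k$ varies, produces the key averaged inequality
\[
    \frac{1}{h}\int_a^{a+h}\sfd(u(s),u(s+N(h)h))\,ds\le\int_a^{a+N(h)h}m_h(r)\,dr.
\]

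The main obstacle will be passing to the limit $h=h_n\to 0^+$ in this inequality. On the right, decompose $\int_a^{a+N(h_n)h_n}m_{h_n}=\int_a^b m_{h_n}+\int_b^{a+N(h_n)h_n}m_{h_n}$: the first term converges to $\int_a^b m$ by weak $L^1$ convergence tested against $\chi_{[a,b]}\in L^\infty(I)$, while the second vanishes since $[b,a+N(h_n)h_n]$ has length at most $h_n$ and $\{m_{h_n}\}$ is uniformly integrable. On the left, each $s\in[a,a+h_n]$ satisfies $s\to a^+$ and $s+N(h_n)h_n\in[b,b+2h_n]\to b^+$, so the right $\tau$-continuity of $u$ at $a$ and at $b$, combined with the $\tau\times\tau$-lower semicontinuity \eqref{lscd} of $\sfd$, yields $\liminf_n\inf_{s\in[a,a+h_n]}\sfd(u(s),u(s+N(h_n)h_n))\ge\sfd(u(a),u(b))$, and hence the analogous bound for the averages. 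Choosing the \emph{ceiling} $\lceil(b-a)/h\rceil$ rather than the floor is essential here: it forces the second argument of $\sfd$ to approach $b$ strictly from the right, so that the hypothesized one-sided continuity of $u$ suffices even when $b$ is a jump point, and the extension $u(t)=u(T)$ beyond $T$ handles the case $b=T$ trivially. Combining the two passages yields $\sfd(u(a),u(b))\le\int_a^b m(r)\,dr$ for every $a\le b$ in $I$, completing the proof.
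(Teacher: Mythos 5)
Your argument is correct, but it takes a genuinely different route from the paper's. The paper first notes that $u(I)$ is $\sfd$-separable, picks a dense sequence $\{y_n\}$ and reduces to the real-valued functions $u_n(t)=\sfd(u(t),y_n)$: each $u_n$ is shown to belong to $AC^\psi(I;\R)$ by testing its difference quotients against $C^\infty_c(I)$ and invoking the representation of bounded linear functionals on $E^{\psi^*}(I)$ (this is where \eqref{superlinearity} and \eqref{beatzero} enter, through positivity and finiteness of $\psi^*$); the modulus is then $m=\sup_n|u_n'|$, bounded in $L^\psi$ via $|u_n'(t)|\le\liminf_{h\to0^+}\sfd(u(t+h),u(t))/h$ and \eqref{lscOrliczNorm}. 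You instead stay at the level of the difference quotients $m_h$ themselves: superlinearity is spent on de la Vall\'ee-Poussin/Dunford--Pettis to extract a weak $L^1$ limit, Mazur plus \eqref{lscOrliczNorm} gives the $L^\psi$ bound, and the absolute-continuity estimate comes from the telescoping-and-averaging inequality together with one-sided continuity. Your route avoids both the separability reduction and the Orlicz duality machinery ($E^{\psi^*}$ and the Rao--Ren representation theorem), and in fact never uses \eqref{beatzero}, so it proves slightly more; the paper's route has the advantage of producing the (essentially minimal) modulus as a pointwise liminf of difference quotients. Two small points deserve a line in a full write-up: (a) passing from $\limsup_{h\to0^+}\|m_h\|_{L^\psi(I)}\le C$ to $\int_I\psi(m_h/C')\,dt\le1$ for $C'>C$ uses the left-continuity of the non-decreasing lower semicontinuous $\psi$, and if $\psi\equiv+\infty$ beyond some finite point then the $m_h$ are uniformly essentially bounded, so equi-integrability holds without de la Vall\'ee-Poussin; (b) the measurability of $s\mapsto\sfd(u(s),u(s+c))$ should be recorded: it follows since $u$ is Borel into $(X,\tau)$ (being right $\sfd$-continuous, hence right $\tau$-continuous by condition (iii)) and $\sfd$ is $\tau\times\tau$-lower semicontinuous, hence Borel.
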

\begin{proof}
Since $I$ is bounded, by the assumptions on $u$ we have that the $\sfd$-closure of $u(I)$ is compact in $(X,\sfd)$.
Consequently $u(I)$ is $\sfd$-separable.
We consider a sequence $\{y_n\}_{n \in \N}$ dense in $(u(I),\sfd)$.
We fix $n\in\N$. Defining $u_n:I\to\R$ by $u_n(t):=\sfd(u(t),y_n)$, the triangular inequality implies
\begin{equation}\label{in:ab}
	|u_n(t+h)-u_n(t)| \leq \sfd(u(t+h),u(t)), \qquad \forall t\in I, h>0.
\end{equation}
Given a test function $\eta \in C^\infty_c(I)$ and $h>0$, recalling H\"older inequality \eqref{holder} we obtain
\begin{align*}
\left|\int_Iu_n(t)\frac{\eta(t-h)-\eta(t)}{h}\,dt\right| & = \left|\int_I\eta(t)\frac{u_n(t+h)-u_n(t)}{h}\,dt\right| \\
    &\leq 2 \left\|\frac{u_n(\cdot+h)-u_n(\cdot))}{h}\right\|_{L^\psi(I)} \left\|\eta\right\|_{L^{\psi^*}(I)}.
\end{align*}
By the last inequality, \eqref{eq:sobolev} and \eqref{in:ab}, passing to the limit for $h\to 0$ we have that
\begin{equation}\label{Lbounded}
	\left|\int_I u_n(t)\eta'(t)\,dt\right|
    \leq C \left\|\eta\right\|_{L^{\psi^*}(I)}.
\end{equation}
The linear functional $\LL_n:(C^\infty_c(I),\|\cdot\|_{L^{\psi^*}(I)})\to\R$
defined by $\LL_n(\eta)=\int_I u_n(t)\eta'(t)\,dt$, by \eqref{Lbounded},
is bounded and we still denote by $\LL_n$ its
extension to $E^{\psi^*}(I)$, the closure of $C^\infty_c(I)$ with respect to the norm $\|\cdot\|_{L^{\psi^*}(I)}$.
Since, by \eqref{superlinearity} and \eqref{beatzero}, $\psi^*$ is continuous and strictly positive on $(0,+\infty)$, $\LL_n$ is uniquely represented
by an element $v_n\in L^{\psi^{**}}(I)$ (see Theorem 6, pag. 105 of \cite{RaoRen}).
The element $v_n$ coincides with the distributional derivative of $u_n$ and then
$u_n \in AC^{\psi}(I;\R)$ (we observe that $\psi^{**}=\psi$ because $\psi$ is convex and lower semi continuous).
We denote by $u'_n(t)$ the pointwise derivative of $u_n$ which exists for a.e. $t\in I$.

We introduce the negligible set
 $$N=\bigcup_{n \in \N} \{t\in I : u'_n(t) \mbox{ does not exists} \},$$
and we define $m(t):=\sup_{n\in\N} |u_n'(t)|$ for all $t \in I \setminus N.$
By the density of $\{y_n\}_{n\in\N}$ in $u(I)$, we have that for all $t,s \in I$, with $s<t$,
\begin{equation}\label{in:abc}
	\sfd(u(t),u(s)) = \sup_{n\in\N} |u_n(t)- u_n(s)| \leq \sup_{n\in\N} \int_s^t |u_n'(r)|\, dr
		\leq \int_s^t m(r)\, dr.
\end{equation}
We show that $m \in L^\psi(I).$ Actually, by (\ref{in:ab}), if $t \in I \setminus N$ then
\begin{equation*}
	|u_n'(t)|=\lim_{h \to 0^+}\frac{|u_n(t+h)- u_n(t)|}{h}
		\leq \liminf_{h\rightarrow 0^+} \frac{\sfd(u(t+h),u(t))}{h},
\end{equation*}
which implies $\displaystyle{m(t) \leq \liminf_{h\rightarrow 0^+} \frac{\sfd(u(t+h),u(t))}{h}}$.
By \eqref{eq:sobolev} and \eqref{lscOrliczNorm} we conclude.
\end{proof}


\subsection{The $\cM(I;X)$ space}\label{subsec:MisurFuncSpace}
We denote by $\MM(I;X)$ the space of curves $u:I \to X$ which are Lebesgue measurable as functions with values in $(X,\tau)$.
We denote by $\cM(I;X)$ the quotient space of $\MM(I;X)$ with respect to the equality $\Leb^1$-a.e. in $I$.
The space $\cM(I;X)$ is a Polish space endowed with the metric
$$\delta_1(u,v):=\int_0^T \tilde\delta(u(t),v(t))\,dt,$$
where $\tilde\delta(x,y):=\min\{\delta(x,y),1\}$ is a bounded distance still inducing $\tau$ and
$\delta$ is a distance inducing $\tau$.

The space $\cM(I;X)$ coincides with $L^1(I;(X,\tilde\delta))$.
It is well known that $\delta_1(u_n,u)\to 0$ as $n\to+\infty$ if and only if $u_n \to u$ in measure as $n\to+\infty$; i.e.
$$ \lim_{n\to+\infty}\Leb^1(\{t\in I: \delta(u_n(t),u(t))>\sigma\})=0, \qquad \forall \, \sigma>0.$$

We recall a useful compactness criterion in $\cM(I;X)$, \cite[Theorem 2]{RossiSavare}.
\begin{theorem}\label{th:complp}
A family $\A \subset \cM(I;X)$ is precompact if
there exists a function $\Psi:X\to [0,+\infty]$ whose
sublevels $\lambda_c(\Psi):= \{ x\in X : \Psi (x) \leq c\}$ are compact for every $c \geq 0,$
such that
\begin{equation}\label{t}
	\sup_{u\in \A}\int_0^T \Psi (u(t)) \, dt <+\infty,
\end{equation}
and there exists a map $g:X\times X \to [0,\infty]$
lower semi continuous with respect to $\tau\times\tau$ such that
$$ g(x,y)=0 \quad\Longrightarrow \quad x=y$$
and
\begin{equation*}
	\lim_{h\to 0^+} \sup_{u\in \A} \int_0 ^{T-h} g(u(t+h),u(t))\, dt =0.
\end{equation*}
\end{theorem}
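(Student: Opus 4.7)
The plan is a Kolmogorov--Riesz--Fr\'echet compactness argument adapted to metric-valued functions. Since $(\cM(I;X),\delta_1)$ metrises convergence in measure, it is enough to show that every sequence $\{u_n\}\subset\A$ admits a $\delta_1$-Cauchy subsequence. First I would apply Chebyshev's inequality to \eqref{t} to obtain uniform space-tightness: for every $\varepsilon>0$ there exists $c>0$ such that the compact set $K_\varepsilon:=\lambda_c(\Psi)$ satisfies $\sup_n \Leb^1(\{t\in I: u_n(t)\notin K_\varepsilon\})<\varepsilon$. Combined with the separability of $(K_\varepsilon,\tau)$ and a diagonal extraction over a sequence $\varepsilon_k\to 0$ and a countable dense set $D\subset I$, this yields a subsequence (still denoted $u_n$) for which $u_n(t)$ converges in $\tau$ for every $t\in D$.

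Next, the $g$-hypothesis provides a uniform integrated time modulus $\omega(h):=\sup_{u\in\A}\int_0^{T-h}g(u(t+h),u(t))\,dt\to 0$, and Markov's inequality gives $\Leb^1(\{t : g(u_n(t+h),u_n(t))>\sigma\})\leq\omega(h)/\sigma$ uniformly in $n$. The key technical step is to convert this integrated $g$-control into $\tilde\delta$-control: on the compact $K_\varepsilon\times K_\varepsilon$, since $g$ is $\tau\times\tau$-lower semicontinuous and $g(x,y)=0\Rightarrow x=y$, a compactness-contradiction argument produces a modulus $\eta_\varepsilon(\sigma)\to 0$ as $\sigma\to 0^+$ such that the implication $(x,y)\in K_\varepsilon\times K_\varepsilon$ and $g(x,y)\leq\sigma\Rightarrow \tilde\delta(x,y)\leq\eta_\varepsilon(\sigma)$ holds; otherwise, passing to the limit along a violating sequence would yield a pair in $K_\varepsilon\times K_\varepsilon$ with $g=0$ and $x\neq y$, contradicting the separation axiom.

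Finally, Cauchyness in measure is assembled by fixing a mesh $\{t_j\}\subset D$ of small spacing $h$, taking $t_{j(t)}$ to be the closest mesh point to $t$, and using the triangle inequality $\tilde\delta(u_n(t),u_m(t))\leq \tilde\delta(u_n(t),u_n(t_{j(t)}))+\tilde\delta(u_n(t_{j(t)}),u_m(t_{j(t)}))+\tilde\delta(u_m(t_{j(t)}),u_m(t))$. After integration over $I$, the first and third terms are controlled by splitting into the events $\{g(u_n(\cdot+h),u_n(\cdot))\leq\sigma\}\cap\{u_n\in K_\varepsilon\}$ (bounded by $\eta_\varepsilon(\sigma)$) and its complement (small measure from Step~2 and tightness, with $\tilde\delta\leq 1$), while the middle term vanishes as $n,m\to\infty$ by pointwise convergence on the finite set $\{t_j\}\subset D$ and dominated convergence. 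Choosing $\varepsilon,\sigma,h$ in that order yields $\delta_1(u_n,u_m)\to 0$. The main obstacle, which dictates the precise form of the hypotheses, is exactly this passage from the integrated $g$-estimate to pointwise $\tilde\delta$-control on the tightness compacts: both the lower semicontinuity of $g$ and the separation axiom $g(x,y)=0\Rightarrow x=y$ are essential here, and the argument collapses if either is dropped.
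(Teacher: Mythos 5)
The paper itself does not prove this statement: it is recalled verbatim from \cite[Theorem 2]{RossiSavare}, so your attempt has to be measured against that proof. Your intermediate lemma converting the $g$-hypothesis into a metric modulus on compacts is correct, and it is indeed the essential way the two hypotheses on $g$ enter: if $(x_k,y_k)\in K\times K$ with $g(x_k,y_k)\to 0$ and $\tilde\delta(x_k,y_k)\ge c>0$, compactness plus lower semicontinuity produce a limit pair with $g(x,y)=0$ but $x\neq y$, a contradiction. The architecture around it, however, has a genuine gap at the very first extraction. Chebyshev applied to \eqref{t} only yields $\Leb^1(\{t\in I: u_n(t)\notin K_\varepsilon\})<\varepsilon$ uniformly in $n$; this controls the \emph{measure} of the exceptional times, but that exceptional set can move with $n$, so for a \emph{fixed} $t$ the sequence $\{u_n(t)\}_{n}$ need not meet any compact set infinitely often and may admit no $\tau$-convergent subsequence at all. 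Consequently the diagonal extraction giving convergence of $u_n(t)$ for every $t$ in a countable dense $D\subset I$ is unjustified, and no choice of $D$ repairs it, because the bad set of times is $n$-dependent (a Fatou argument gives $\liminf_n\Psi(u_n(t))<+\infty$ only for a.e.\ $t$ and only along $t$-dependent subsequences, which do not diagonalize). The same defect resurfaces in your final assembly, where you need the mesh points $t_j$ to satisfy $u_n(t_j)\in K_\varepsilon$ for all $n$ simultaneously before the modulus $\eta_\varepsilon$ can be invoked.

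The proof in \cite{RossiSavare} avoids pointwise evaluation altogether by working with the measures $\lambda_n:=\bigl(t\mapsto (t,u_n(t))\bigr)_\#\bigl(\Leb^1|_I\bigr)$ on $I\times X$: condition \eqref{t} makes $\{\lambda_n\}$ tight because $(t,x)\mapsto \Psi(x)$ has compact sublevels in $I\times X$; one extracts a narrow limit $\lambda$ with first marginal $\Leb^1|_I$ and disintegration $(\lambda_t)_{t\in I}$; and the $g$-condition --- through exactly the lower-semicontinuity argument you isolated, but applied to the limit measures rather than to individual trajectories --- forces $\int_X\int_X g\,d\lambda_t\,d\lambda_t=0$, hence each $\lambda_t$ to be a Dirac mass $\delta_{u(t)}$ for a.e.\ $t$. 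Convergence of $u_n$ to $u$ in measure (i.e.\ in $\delta_1$) then follows from general Young-measure theory. If you wish to keep a hands-on Cauchy-in-$\delta_1$ argument, you must replace your Step 1 by some such measure-level step; as written, the proposal does not go through.
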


\subsection{Push forward of probability measures}\label{subsec:PushForw}
If $Y,Z$ are topological spaces, $\mu \in \Prob{Y}$ and $F:Y \rightarrow Z$ is a Borel map (or a $\mu$-measurable map),
the \emph{push forward of $\mu$ through F}, denoted by $F_\# \mu \in \Prob{Z}$, is defined as follows:
\begin{equation}\label{pushforward}
	F_\# \mu (B):=\mu (F^{-1}(B)) \qquad \forall B \in \boreliani{Z}.
\end{equation}
It is not difficult to check that this definition is equivalent to
\begin{equation}\label{changevariable}
	\int _Z \varphi (z)\,d(F_\# \mu )(z)=\int _Y \varphi (F(y))\,d\mu (y) 
\end{equation}
for every bounded Borel function $\varphi :Z \rightarrow \R$.
More generally (\ref{changevariable}) holds for every $F_\# \mu$-integrable
function $\varphi :Z \rightarrow \R.$ \\
We recall the following composition rule: for every $\mu \in \Prob{Y}$ and for all Borel maps
$F:Y \rightarrow Z$ and $G:Z \rightarrow W$, we have
\begin{equation*}
	(G \circ F)_\# \mu =G_\# (F_\# \mu ).
\end{equation*}
The following continuity property holds:
\begin{equation*}
	F:Y \rightarrow Z \quad \mbox{continuous} \quad \Longrightarrow \quad
	F_\# :\Prob{Y} \rightarrow \Prob{Z} \quad \mbox{narrowly continuous.}
\end{equation*}

We say that $\mu\in \Prob{Y}$ is concentrated on the set $A$ if $\mu(X\setminus A)=0$.
It follows from the definition that $F_\#\mu$ is concentrated on $F(A)$ if $\mu$ is concentrated on $A$.

The support of a Borel probability measure $\mu\in \Prob{Y}$ is the closed set defined by
$\supp \mu =\{y\in Y: \mu(U)>0, \forall U \text{ neighborhood of }y \}$.
$\mu$ is concentrated on $\supp\mu$ and it is the smallest closed set on which $\mu$ is concentrated.

In general we have $F(\supp \mu) \subset \supp F_\#\mu \subset \overline{F(\supp \mu)}$ for $F:Y \rightarrow Z$ continuous.

It follows that $F_\#\mu(\supp F_\#\mu\setminus F(\supp \mu))=0$.

The following Lemma is fundamental in our proof of Theorem \ref{th:curveAC}.
It allows to recover a pointwise bound assuming an integral bound.
\begin{lemma}\label{lemma:Kur}
Let $Y$ be a Polish space and $\{\mu_n\}_{n\in\N}\subset \Prob{Y}$ be a sequence narrowly convergent to $\mu\in\Prob{Y}$ as $n\to+\infty$.
Let $F_n:Y\to [0,+\infty)$ be a sequence of $\mu_n$-measurable functions such that
\begin{equation}\label{integralbound}
  \sup_{n\in\N}\int_Y F_n(y)\,d\mu_n(y)<+\infty.
\end{equation}
Then there exists a subsequence $\mu_{n_k}$ such that
\begin{equation}\label{pointwisebound}
 \text{for $\mu$-a.e. } \bar y\in\supp{\mu} \quad \exists y_{n_k}\in\supp{\mu_{n_k}} : \lim_{k\to+\infty}y_{n_k}=\bar y
 \qquad \text{and } \quad \sup_{k\in\N}F_{n_k}(y_{n_k})<+\infty.
\end{equation}
\end{lemma}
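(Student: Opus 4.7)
The strategy is to encode the values of $F_n$ as a second coordinate, pass to a narrow limit, and recover the desired pointwise bound from the support of the limit measure.

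First I would lift to the joint probability measures $\nu_n:=(\id,F_n)_\#\mu_n\in\Prob{Y\times[0,+\infty)}$, so that $\int s\,d\nu_n(y,s)=\int_Y F_n\,d\mu_n\le C$, where $C$ is the bound in \eqref{integralbound}. The family $\{\nu_n\}$ is tight on $Y\times[0,+\infty)$: its first marginal $\mu_n$ is tight since $\mu_n\to\mu$ narrowly, and Markov's inequality controls the second coordinate via $\nu_n(Y\times[R,+\infty))\le C/R$. Prokhorov's theorem therefore yields a subsequence $\nu_{n_k}$ converging narrowly to some $\nu\in\Prob{Y\times[0,+\infty)}$, and continuity of the projection $\pi^1$ gives $\pi^1_\#\nu=\mu$.

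Since $\nu$ is concentrated on the closed set $\supp\nu$, the marginal $\mu=\pi^1_\#\nu$ is concentrated on $\pi^1(\supp\nu)$; in a Polish space the continuous image of a closed set is analytic and hence universally measurable, so $\mu(\pi^1(\supp\nu))=1$. Consequently, for $\mu$-a.e. $\bar y\in\supp\mu$ there exists $s(\bar y)\in[0,+\infty)$ with $(\bar y,s(\bar y))\in\supp\nu$. Fix a compatible distance on $Y$ and such a $\bar y$. For every $\varepsilon>0$ the open neighbourhood $U_\varepsilon:=B(\bar y,\varepsilon)\times(s(\bar y)-\varepsilon,s(\bar y)+\varepsilon)$ of $(\bar y,s(\bar y))$ has $\nu(U_\varepsilon)>0$, whence by the portmanteau inequality on open sets
\begin{equation*}
  \liminf_{k\to\infty}\nu_{n_k}(U_\varepsilon)\ge\nu(U_\varepsilon)>0.
\end{equation*}
Hence, for $k$ larger than a threshold $K(\varepsilon,\bar y)$ the Borel set $A_k^\varepsilon:=\{y\in B(\bar y,\varepsilon):F_{n_k}(y)<s(\bar y)+\varepsilon\}$ has positive $\mu_{n_k}$-measure, and therefore meets $\supp\mu_{n_k}$.

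A standard diagonal argument with $\varepsilon=1/m$ then selects, for every sufficiently large $k$, a point $y_{n_k}\in A_k^{1/m(k)}\cap\supp\mu_{n_k}$ with $m(k)\to\infty$; this yields $y_{n_k}\in\supp\mu_{n_k}$ with $y_{n_k}\to\bar y$ and $F_{n_k}(y_{n_k})\le s(\bar y)+1$ eventually, completing the proof. The delicate point will be that $\nu_n$ is concentrated on the graph of the merely measurable function $F_n$, which is in general neither closed nor analytic, so $\supp\nu_n$ may strictly contain this graph and one cannot directly read off values of $F_{n_k}$ from points of $\supp\nu_{n_k}$. I therefore avoid working with $\supp\nu_{n_k}$ at the pre-limit level and instead use portmanteau on the open neighbourhoods $U_\varepsilon$ to produce a Borel slice of positive $\mu_{n_k}$-measure in which the desired $y_{n_k}$ can be picked.
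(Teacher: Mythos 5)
Your proposal is correct and follows essentially the same route as the paper: lift to the graph measures $\nu_n=(\id\times F_n)_\#\mu_n$, get tightness from the marginals, pass to a narrow limit $\nu$ via Prokhorov, and then use the portmanteau lower bound on small open boxes around points of $\supp\nu$ to extract points of $\supp\mu_{n_k}$ with controlled $F_{n_k}$-values. Your added care about the measurability of $\pi^1(\supp\nu)$ (analytic, hence universally measurable) and about not reading $F_{n_k}$ off $\supp\nu_{n_k}$ directly are sound refinements of steps the paper treats more briskly, but they do not change the argument.
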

\begin{proof}
Let us define the sequence $\nu_n:=(\id\times F_n)_\#\mu_n\in\Prob{Y\times\R}$, where $\id$ denotes the identity map in $Y$.
We denote by $\pi^1:Y\times \R \to Y$ and $\pi^2:Y\times \R \to \R$ the projections defined by $\pi^1(y,z)=y$ and $\pi^2(y,z)=z$.
The set $\{\nu_n\}_{n\in\N}$ is tight because $\{\pi^1_\#\nu_n\}_{n\in\N}$ and $\{\pi^2_\#\nu_n\}_{n\in\N}$ are tight.
Indeed $\pi^1_\#\nu_n=\mu_n$ is narrowly convergent,
and $\pi^2_\#\nu_n=(F_n)_\#\mu_n$ has first moments uniformly bounded because
$$\int_\R |z|\,d\pi^2_\#\nu_n(z)=\int_Y |F_n(y)|\,d\mu_n(y),$$
$F_n\geq 0$ and \eqref{integralbound} holds.
By Prokhorov's Theorem there exists $\nu\in\Prob{Y\times\R}$ and a subsequence $\{\nu_{n_k}\}_{k\in\N}\subset\Prob{Y\times\R}$
narrowly convergent to $\nu$.
Since $\pi^1_\#\nu_n=\mu_n$ and $\pi^1_\#\nu_{n_k} \to \pi^1_\#\nu$ as $k\to +\infty$ we have that $\pi^1_\#\nu=\mu$.

Let $\bar y\in\pi^1(\supp\nu)$, and we observe that $\mu(\supp \mu \setminus \pi^1(\supp\nu))=0$.
By definition of $\bar y$ there exists $z\in\R$ such that $(\bar y,z)\in\supp{\nu}$.
Let $h\in\N$ and $D_{1/h}(\bar y,z):=B_{1/h}(\bar y)\times (z-1/h,z+1/h)$ where $B_r(\bar y)$ denotes the open ball of radius $r$ and center $\bar y$,
when a distance in $Y$ is fixed.
By \eqref{narrowlsc}, with $\varphi$ the characteristic function of $D_{1/h}(\bar y,z)$, we obtain
$$ \liminf_{k\to+\infty}\nu_{n_k}(D_{1/h}(\bar y,z))\geq \nu(D_{1/h}(\bar y,z))>0.$$
Then there exists $k(h)\in\N$ such that
\begin{equation}\label{abc}
    \nu_{n_k}(D_{1/h}(\bar y,z))>0 \qquad \forall k\geq k(h).
\end{equation}
By definition of $\nu_n$
\begin{equation}\label{abcd}
\begin{aligned}
    \nu_{n_k}(D_{1/h}(\bar y,z)) &= \mu_{n_k}(\{y\in Y: (\id\times F_{n_k})(y)\in D_{1/h}(\bar y,z)\})\\
    &= \mu_{n_k}(\{y\in Y: (y,F_{n_k}(y))\in B_{1/h}(\bar y)\times (z-1/h,z+1/h)\}).
\end{aligned}
\end{equation}
By \eqref{abc} and \eqref{abcd} we have that
\begin{equation}\label{ab}
    \supp \mu_{n_k} \cap \{y\in Y: (y,F_{n_k}(y))\in B_{1/h}(\bar y)\times (z-1/h,z+1/h)\} \not= \emptyset \qquad \forall k\geq k(h).
\end{equation}
Since we can choose the application $h\mapsto k(h)$ strictly increasing, by \eqref{ab} we can select a sequence
$y_{n_k} \in \supp \mu_{n_k} \cap \{y\in Y: (y,F_{n_k}(y))\in B_{1/h}(\bar y)\times (z-1/h,z+1/h)\}$.
By definition $y_{n_k} \to \bar y$ and $F_{n_k}(y_{n_k})\to z$ as $k\to+\infty$.
Since $F_{n_k}(y_{n_k})$ converges in $\R$ we obtain the bound in \eqref{pointwisebound}.
\end{proof}

\subsection{The extended Wasserstein-Orlicz space $(\Prob{X},W_\psi)$}\label{subsec:Wasserstein}

Given $\mu,\nu \in \Prob{X}$ we define the set of admissible plans $\admissibleplan{\mu}{\nu}$ as follows:
$$ \admissibleplan{\mu}{\nu}:=\{\gamma \in \Prob{X \times X}:\pi^1_\# \gamma =\mu , \, \pi^2_\# \gamma = \nu \},$$
where $\pi^i:X\times X \to X$, for $i=1,2$, are the projections on the first and the second component, defined by $\pi^1(x,y)=x$ and  $\pi^2(x,y)=y$.

Given $\psi$ satisfying \eqref{hp:psi}, the $\psi$-Wasserstein-Orlicz extended distance between $\mu,\nu \in \Prob{X}$ is defined by
\begin{equation}\label{def:WassersteinOrlicz}
\begin{aligned}
	W_\psi(\mu ,\nu ) &:=
\inf_{\gamma \in \admissibleplan{\mu}{\nu}}\inf\left\{\lambda>0:\int _{X \times X} \psi\Big(\frac{\sfd(x,y)}{\lambda}\Big)\,d\gamma (x,y)\leq 1 \right \} \\
& = \inf_{\gamma \in \admissibleplan{\mu}{\nu}}\|\sfd(\cdot,\cdot)\|_{L^\psi_\gamma(X\times X)}.
\end{aligned}
\end{equation}
It is easy to check that
$$ W_\psi(\mu ,\nu )= \inf\left\{\lambda>0:\inf_{\gamma \in \admissibleplan{\mu}{\nu}}\int _{X \times X} \psi\Big(\frac{\sfd(x,y)}{\lambda}\Big)\,d\gamma (x,y)\leq 1 \right \}$$
which is the definition given in \cite{Sturm4} (see also \cite{Kuwada13}).

When the set of $\gamma \in \admissibleplan{\mu}{\nu}$  such that
$\|\sfd(\cdot,\cdot)\|_{L^\psi_\gamma(X\times X)} <+\infty$ is empty,
then $W_\psi(\mu ,\nu )=+\infty$.
Otherwise it is not difficult to show that a minimizer $\gamma \in \admissibleplan{\mu}{\nu}$
in \eqref{def:WassersteinOrlicz} exists.
We denote by $\optimalplan{\mu}{\nu}$ the set of minimizers in \eqref{def:WassersteinOrlicz}.
We observe that
\begin{equation}\label{eq:wopt}
\gamma\in \optimalplan{\mu}{\nu} \qquad \Longleftrightarrow \qquad \int_{X\times X} \psi\Big(\frac{\sfd(x,y)}{W_\psi(\mu,\nu)}\Big)\,d\gamma (x,y)\leq 1.
\end{equation}
Since $\psi$ satisfies \eqref{hp:psi} it is well defined $\psi^{-1}(s)$ for every $s>0$,
with the convention that in the case that $\psi(r)=+\infty$ for $r > r_0$ and $\psi(r_0) < +\infty$
we define $\psi^{-1}(s)= r_0$ for every $s>\psi(r_0)$.

Moreover if  $\gamma\in \optimalplan{\mu}{\nu}$ then
\begin{equation}\label{eq:est}
\int_{X\times X}\sfd(x,y)\,d\gamma (x,y)\leq \psi^{-1}(1) W_\psi(\mu,\nu).
\end{equation}
Indeed, for $\mu\neq\nu$ (the other case is trivial) using Jensen's inequality and \eqref{eq:wopt} 
\begin{align*}
    \psi\Big(\int_{X\times X}\frac{\sfd(x,y)}{W_\psi(\mu,\nu)}\,d\gamma(x,y)\Big)
    \leq \int_{X\times X}\psi\Big(\frac{\sfd(x,y)}{W_\psi(\mu,\nu)}\Big)\,d\gamma(x,y) \leq 1
\end{align*}
and \eqref{eq:est} follows.

Being $(X,\sfd)$ complete, $(\Prob{X},W_\psi)$, is complete too
(the proof of \cite[Proposition 7.1.5]{ags} works also in the case of the extended distance $\sfd$ and the Orlicz-Wasserstein distance).

We observe that $(X,\sfd)$ is embedded in $(\Prob{X},W_\psi)$ via the map $x\mapsto \delta_x$ and it holds
\begin{equation}
    W_\psi(\delta_x,\delta_y) = \frac{1}{\psi^{-1}(1)}d(x,y).
\end{equation}

Thanks to the compatibility condition (iii) in the definition of extended Polish space we also have the following fundamental property:
\begin{equation}\label{wn}
  W_\psi(\mu_n,\mu)\to 0 \quad \Longrightarrow \quad \mu_n \to \mu \text{ narrowly in } \Prob{X}.
\end{equation}

The space $(\Prob{X},W_\psi)$ is an extended Polish space, when in $\Prob{X}$ we consider the narrow topology.

\section{Main theorem}\label{main}
In this section we state and prove our main result: a characterization of absolutely continuous curves
with finite $L^\psi$-energy in the extended $\psi$-Wasserstein-Orlicz space $(\Prob{X},W_\psi)$.

Before to state the result, we define, for every $t\in I$,
the \emph{evaluation map} $e_t :C(I;X) \rightarrow X$ in this way
\begin{equation}\label{evaluation}
	e_t(u )=u (t)
\end{equation}
and we observe that $e_t$ is continuous.

\begin{theorem} \label{th:curveAC}
Let $\psi$ be satisfying \eqref{hp:psi}, \eqref{superlinearity} and \eqref{beatzero}.
Let $(X,\tau,\sfd)$ be an extended Polish space and $I:=[0,T]$, $T>0$.
If $\mu \in AC^{\psi}(I;(\Prob{X},W_\psi))$,
then there exists $\eta \in \Prob{C(I;X)}$ such that
\begin{itemize}
	\item[(i)]$\eta$ is concentrated on $AC^{\psi}(I;(X,\sfd))$,
	\item[(ii)]$(e_t)_\# \eta=\mu_t \qquad \forall t \in I$,
	\item[(iii)]for a.e. $t \in I,$ the metric derivative $|u'|(t)$ exists for $\eta-$a.e. $u\in C(I;X)$ and
                it holds the equality
$$|\mu'|(t) = \||u'|(t)\|_{L^\psi_{\eta}(C(I;X))} \qquad \mbox{for a.e. }t \in I. $$
\end{itemize}
\end{theorem}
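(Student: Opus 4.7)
The plan is to adapt the strategy of \cite{lisini07}, replacing Ascoli--Arzel\`a compactness in $C(I;X)$ (unavailable here since $(X,\sfd)$ is non-separable and need not carry geodesics) by the $\cM(I;X)$-compactness criterion of Theorem \ref{th:complp}. For each $n\geq 1$ set $h_n:=T/2^n$ and $t^n_k:=k h_n$, pick $\gamma^n_k\in \optimalplan{\mu_{t^n_k}}{\mu_{t^n_{k+1}}}$, and apply the iterated gluing lemma on the Polish space $(X,\tau)$ to obtain $\sigma_n\in\Prob{X^{2^n+1}}$ having the $\gamma^n_k$ as consecutive bivariate marginals. Defining $F_n:X^{2^n+1}\to\cM(I;X)$ as the right-continuous piecewise-constant interpolation and $\eta_n:=(F_n)_\#\sigma_n$, one has $(e_{t^n_k})_\#\eta_n=\mu_{t^n_k}$ on the dyadic grid. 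Tightness of $\{\eta_n\}$ in $\Prob{\cM(I;X)}$ then follows by applying Markov's inequality to the hypotheses of Theorem \ref{th:complp}: the coercive part comes from tightness of the narrowly compact image $\{\mu_t\}_{t\in I}$ (via \eqref{wn}), while for the modulus of continuity a telescoping sum combined with \eqref{eq:est} gives
$$\int_{\cM(I;X)}\!\int_0^{T-h}\sfd(u(t+h),u(t))\,dt\,d\eta_n(u)\leq h\,\psi^{-1}(1)\int_0^T|\mu'|(r)\,dr$$
for $h\geq h_n$. Extract a subsequence with $\eta_{n_k}\to\eta$ narrowly.

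The core step is to show that $\eta$ is concentrated on $AC^\psi$-curves. Consider the functional $F_{n,h}(u):=\|\sfd(u(\cdot+h),u(\cdot))/h\|_{L^\psi(I)}$, with $u$ extended by $u(T)$ beyond $T$. A telescoping argument carried out directly on the Luxemburg functional $\int\psi(\cdot/\lambda)\,d\gamma^n_k$, based on \eqref{eq:wopt}, convexity of $\psi$, and Jensen's inequality, yields the uniform estimate
$$\sup_n\int_{\cM(I;X)}F_{n,h}(u)\,d\eta_n(u)\leq C\qquad\text{for all } h\geq h_n,$$
with $C$ depending only on $\||\mu'|\|_{L^\psi(I)}$. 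Applying Lemma \ref{lemma:Kur} diagonally over $\{F_{n_k,1/p}\}_{p\in\N}$ produces, for $\eta$-a.e.\ $u$, approximants $u_{n_k}\in\supp\eta_{n_k}$ with $u_{n_k}\to u$ in $\cM(I;X)$ and $\sup_k F_{n_k,1/p}(u_{n_k})<+\infty$ for every $p$. The lower semicontinuities \eqref{lscd} and \eqref{lscOrliczNorm}, together with property (iii) of extended Polish spaces, then pass the bound to a right-continuous representative $\tilde u$ of $u$, verifying \eqref{eq:sobolev} and triggering Lemma \ref{lemma:dq} to give $\tilde u\in AC^\psi(I;(X,\sfd))$. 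After canonically lifting $\eta$ from $\Prob{\cM(I;X)}$ to $\Prob{C(I;X)}$ via the Borel selection $u\mapsto\tilde u$, this yields (i), and (ii) then extends from the dense dyadic grid to every $t\in I$ using continuity of $\tilde u$ and narrow continuity of $t\mapsto\mu_t$.

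For (iii), the inequality $|\mu'|(t)\leq \||u'|(t)\|_{L^\psi_\eta}$ a.e.\ follows by viewing $(e_s,e_t)_\#\eta$ as an admissible plan between $\mu_s$ and $\mu_t$ and invoking the Minkowski integral inequality for $\|\cdot\|_{L^\psi_\eta}$ together with the definition of $W_\psi$; the reverse inequality is obtained by passing the uniform $F_{n_k,h}$-estimate to the limit via Fatou and \eqref{lscOrliczNorm}, which, after letting $h\to 0^+$, produces $\||u'|(t)\|_{L^\psi_\eta}\leq|\mu'|(t)$ for a.e.\ $t$. The main technical obstacle is precisely the uniform estimate on $F_{n,h}$ of the second paragraph: the non-homogeneity of $\psi$ rules out the rescaling arguments of the $L^p$ case \cite{lisini07}, so the telescoping must be performed inside the Luxemburg functional with a normalization $\lambda(u)$ chosen adaptively (essentially of size $\int_t^{t+h}|\mu'|\,dr$). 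Combined with the detour through $\cM(I;X)$ forced by non-separability of $(X,\sfd)$ and the a posteriori recovery of continuity via Lemma \ref{lemma:dq}, this is the principal departure from \cite{lisini07}.
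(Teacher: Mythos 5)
Your overall architecture (dyadic grids, glued optimal plans, piecewise-constant interpolation, compactness in $\cM(I;X)$ via Theorem \ref{th:complp}, recovery of continuity through Lemma \ref{lemma:dq}) matches the paper's, and your Minkowski-integral-inequality route to $|\mu'|(t)\leq \||u'|(t)\|_{L^\psi_\eta}$ is a reasonable variant. But the core step has a genuine gap. You apply Lemma \ref{lemma:Kur} separately (even if diagonally) to the functionals $F_{n,1/p}$ for each $p$. What this yields, for $\eta$-a.e.\ $u$ and each fixed $p$, is a finite constant $C_p(u):=\sup_k F_{n_k,1/p}(u_{n_k})$, and after passing to the limit via \eqref{lscd} and \eqref{lscOrliczNorm} you get $\bigl\|p\,\sfd(u(\cdot+1/p),u(\cdot))\bigr\|_{L^\psi(I)}\leq C_p(u)<+\infty$ \emph{for each $p$}. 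Nothing controls $\sup_p C_p(u)$: the constants produced by Lemma \ref{lemma:Kur} are the second coordinates of points in the support of the limit plan $\nu^{(p)}$, and only their $\nu^{(p)}$-integrals are uniformly bounded, not their values at a fixed $u$. Hence the hypothesis \eqref{eq:sobolev} of Lemma \ref{lemma:dq}, which requires a bound on the difference quotients \emph{uniform as $h\to 0^+$}, is not established, and (i) does not follow. The fix is to take the supremum over $h$ \emph{before} passing to the limit in $n$: the paper works with the monotone family of lower semicontinuous functionals $f_N(u)=\sup_{2^{-N}\leq h<1}\int_0^{1-h}\psi\bigl(\sfd(u(t+h),u(t))/2h\bigr)\,dt$, shows $\int f_N\,d\eta_N\leq 1$, deduces $\int f_N\,d\eta\leq 1$ for every $N$ from lower semicontinuity plus the monotonicity $f_N\leq f_{N'}$ for $N\leq N'$, and concludes $\sup_N f_N(u)<+\infty$ $\eta$-a.e.\ by monotone convergence --- a single $u$-dependent constant valid for all $h\in(0,1)$. (Lemma \ref{lemma:Kur} is genuinely needed only once, for the single functional $u\mapsto \eV(u,I)$, to produce the BV right-continuous representative; your use of it for the $L^\psi$ difference quotients is where the argument breaks.)

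A secondary but nontrivial point: you propose to run the telescoping inside the Luxemburg functional with an adaptive normalization of size $\int_t^{t+h}|\mu'|\,dr$, without reparametrizing. Because $\psi$ is not homogeneous, making the discrete Jensen step close up requires that the ratios $\sfd(x_{j+1},x_j)/W_\psi(\mu_{t^j},\mu_{t^{j+1}})$ appear with weights summing to the right normalization, and with a $t$-dependent $\lambda$ the modular $\int_I\psi(\cdot/\lambda(t))\,dt$ is no longer a Luxemburg norm, so \eqref{lscOrliczNorm} and the duality \eqref{duality} are not directly available. The paper sidesteps all of this by first reparametrizing $\mu$ to unit speed, so that $W_\psi(\mu_{t^j},\mu_{t^{j+1}})\leq 2^{-N}$ and the estimate $\int_{\XX_N}\psi(2^N\sfd(x_{j+1},x_j))\,d\gamma_N\leq 1$ is immediate from \eqref{eq:wopt}; the general case is then recovered by composing with the arc-length map and proving (i) and (iii) for the pushed-forward measure. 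I would strongly recommend adopting that reduction rather than attempting the adaptive normalization.
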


\begin{proof}
We preliminary assume that
\begin{equation}\label{hp:velcost}
    |\mu'| = 1 \qquad \text{for a.e. }t\in I,
\end{equation}
and we will remove this assumption in Step 6 of this proof.
We also assume for simplicity that $I=[0,1]$.

For any integer $N \geq 1,$ we divide the unitary interval $I$ in $2^N$ equal parts,
and we denote by $t^i$ the points
\begin{equation*}
	t^i:=\frac{i}{2^N} \qquad i=0,1,\ldots ,2^N.
\end{equation*}
We also denote by $\XX_N$ the product space
$$\XX_N:=X_0 \times X_1 \times \ldots \times X_{2^N}$$
where $X_i,$ with $i=0,1,\ldots ,2^N$, are
$2^N+1$ copies of the same space $X$.

Choosing optimal plans
$$\gamma_N^i \in \optimalplan {\mu_{t^i}}{\mu_{t^{i+1}}} \qquad i=0,1,\ldots ,2^N-1,$$
there exists (see for instance \cite[Lemma 5.3.2 and Remark 5.3.3]{ags})
a measure $\gamma _N \in \Prob{\vett{X}_N}$ such that
\begin{equation*}
	\pi ^i_{\#} \gamma _N = \mu_{t^i} \qquad \mbox{and} \qquad \pi ^{i,i+1}_{\#} \gamma _N = \gamma_ N^i,
\end{equation*}
where we denoted by $\pi^i:\vett{X}_N \to X_i$ the projection on the $i$-th component and
by $\pi^{i,j}:\vett{X}_N \to X_i \times X_j$ the projection on the $(i,j)$-th component.\\
We define ${\sigma}:\XX_N \to \MM(I;X)$, and we use the notation
${\xx}=(x_0,\ldots ,x_{2^N}) \mapsto {\sigma_\sxx}$, by
\begin{equation*}
	{\sigma_\sxx}(t):=x_i \qquad \mbox{if } \quad t \in [t^i,t^{i+1}),  \qquad i=0,1,\ldots ,2^N-1.
\end{equation*}
Finally, we define the sequence of probability measures
$$\eta_N:={\sigma}_{\#}\gamma _N \in \Prob{\cM(I;X)}.$$

{\bf Step 1. (Tightness of $\{\eta _N\}_{N \in \N}$ in $\Prob{\cM(I;X)}$)}
In order to prove the tightness of $\{\eta _N\}_{N \in \N}$  in $\Prob{\cM(I;X)}$
(we recall that $\cM(I;X)$ is a Polish space with the metric $\delta_1$)
it is sufficient to show the existence of a function
$\Phi:\cM(I;X) \rightarrow [0,+\infty]$
whose sublevels
$\lambda_c(\Phi):=\{u \in \cM(I;X): \Phi(u)\leq c\}$ are compact in $\cM(I;X)$ for any $c\in \R_+,$  and
\begin{equation} \label{boundedness}
	\sup_{N \in \N} \int_{\cM(I;X)}\Phi (u) \, d\eta_N (u) <+\infty .
\end{equation}
First of all we observe that $\A :=\{\mu_t : t \in I \}$ is compact
in $(\Prob{X},W_\psi)$ (because it is a continuous image of a compact) and consequently in $\Prob{X}$.
Since, by Prokhorov's Theorem, $\A$ is tight in $\Prob{X}$ there exists a function
$\Psi:X \to [0,+\infty]$ whose sublevels
$\lambda_c(\Psi):=\{x \in X: \Psi(x)\leq c\}$ are compact in $X$ for any $c\in \R_+,$
such that
\begin{equation} \label{intpsi}
	\sup_{t \in I} \int_X \Psi(x)\,d\mu_t(x)<+\infty .
\end{equation}
We define $\Phi : \cM(I;X) \to [0,+\infty]$ by
\begin{equation*}
	\Phi (u ) :=  \int_0^1 \Psi (u(t)) \,dt
						+ \sup_{h\in(0,1)}\int_0^{1-h}\frac{\sfd(u(t+h),u(t))}{h}\,dt .	
\end{equation*}
The compactness of the sublevels $\lambda_c(\Phi)$ in $\cM(I;X)$ follows by Theorem \ref{th:complp}
with the choice $g(x,y)=\sfd(x,y)$.
In order to prove \eqref{boundedness} we begin to show that
\begin{equation}\label{firstbound}
\sup_{N \in \N} \int_{\cM(I;X)} \int_0^1 \Psi (u(t))) \,dt \, d\eta_N (u) <+\infty.
\end{equation}
By the definition of $\eta_N$ we have
\begin{eqnarray*}
	\begin{aligned}
	 \int_{\cM(I;X)} \int_0^1\Psi (u(t)) \,dt\, d\eta_N (u)
	&  = \int_{\XX_N}\int_0^1\Psi(\sigma_\sxx(t))\,dt\,d\gamma_N(\xx)\\
    & =  \int_{\XX_N} \sum_{i=0}^{2^N-1} \int _{t^i}^{t^{i+1}} \Psi(x_i)\,dt\,d\gamma_N(\xx)\\
    & =  \int_{\XX_N} \frac{1}{2^N}\sum_{i=0}^{2^N-1} \Psi(x_i) \,d\gamma_N(\xx)\\
	& = \frac{1}{2^N}\sum_{i=0}^{2^N-1} \int _X \Psi (x)\, d\mu_{t^i}(x) \\
	& \leq  \frac{1}{2^N}\sum_{i=0}^{2^N-1}\sup_{t \in I}\int_X\Psi(x)\,d\mu_t(x) = \sup_{t \in I}\int_X\Psi(x)\,d\mu_t(x)
	\end{aligned}
\end{eqnarray*}
and \eqref{firstbound} follows by \eqref{intpsi}.
The second bound that we have to show is
\begin{equation}\label{terzobound}
	\sup_{N\in\N}\int_{\cM(I;X)}\sup_{h\in(0,1)}\int_0^{1-h}\frac{\sfd(u(t+h),u(t))}{h}\,dt\,d\eta_N(u) <+\infty.
\end{equation}
First of all we prove that for $\xx \in \XX_N$ we have
\begin{equation}\label{ineq:29}
	\sup_{h\in(0,1)}\int_0^{1-h}\frac{\sfd(\sigma_\sxx(t+h),\sigma_\sxx(t))}{h}\,dt
		\leq 2\sum_{i=0}^{2^N-1} \sfd(x_i,x_{i+1}).
\end{equation}
We fix $h\in(0,1)$. When $h<2^{-N}$ we have that $\sigma_\sxx(t+h) = \sigma_\sxx(t)$ for every $t\in [t^i,t^{i+1}-h]$ and $i=0,\ldots,2^N-1$.
Then
\begin{equation}\label{in:40}
	\int_0^{1-h}\sfd(\sigma_\sxx(t+h),\sigma_\sxx(t))\,dt
		= \sum_{i=0}^{2^N-1} \int_{t^i}^{t^{i+1}}\sfd(\sigma_\sxx(t+h),\sigma_\sxx(t))\,dt = h\sum_{i=0}^{2^N-2} \sfd(x_i,x_{i+1}).
\end{equation}
Now we assume that $h \geq 2^{-N}$ and we take the integer $k(h)=[h2^N]$, where $[a]:=\max\{n\in\Z:n\leq a\}$ is the integer part of the real number $a$.
Since the triangular inequality yields
\begin{equation*}
	\sfd(\sigma_\sxx(t+h),\sigma_\sxx(t)) \leq \sum_{i=0}^{k(h)} \sfd(\sigma_\sxx (t+t^{i+1}),\sigma_\sxx (t+t^i)),
\end{equation*}
we have that
\begin{eqnarray}
	\int_0^{1-h} \sfd(\sigma_\sxx(t+h),\sigma_\sxx(t))\,dt &\leq &
		\int_0^{1-t^{k(h)}} \sfd(\sigma_\sxx (t+h),\sigma_\sxx (t))\,dt \nonumber \\
	&\leq & \int_0^{1-t^{k(h)}} \sum_{i=0}^{k(h)} \sfd(\sigma_\sxx (t+t^{i+1}),\sigma_\sxx (t+t^i))\,dt \label{in:50} \\
	&= & \sum_{i=0}^{k(h)} \frac{1}{2^N}\sum_{j=0}^{2^N-k(h)-1}\sfd(x_{i+j+1},x_{i+j}). \nonumber
\end{eqnarray}
Observing that in (\ref{in:50}) the term $\sfd(x_{k+1},x_{k})$, for every $k=0,1,\ldots,2^N-1$, is counted at most $k(h)+1$ times, we obtain that
\begin{equation}\label{in:51}
	\int_0^{1-h}\sfd(\sigma_\sxx(t+h),\sigma_\sxx(t))\,dt \leq \frac{k(h)+1}{2^Nh}h\sum_{j=0}^{2^N-1}\sfd(x_{j+1},x_{j})
        \leq 2h\sum_{j=0}^{2^N-1}\sfd(x_{j+1},x_{j}),
\end{equation}
because
$$\frac{k(h)+1}{h2^N}\leq \frac{k(h)+1}{k(h)}\leq 2.$$
The inequality (\ref{ineq:29}) follows from \eqref{in:51} and \eqref{in:40}.
Finally, by (\ref{ineq:29}), \eqref{eq:est} taking into account the optimality of the plans $\pi^{i,i+1}_\#\gamma_N$,
and \eqref{hp:velcost} we have
\begin{equation}\label{in:53}
\begin{aligned}
    \int_{\cM(I;X)} \sup_{h\in(0,1)}\int_0^{1-h}\frac{\sfd(u(t+h),u(t))}{h}\,dt\,d\eta_N (u) &\leq
	2\int_{\XX_N} \sum_{i=0}^{2^N-1} \sfd(x_i,x_{i+1}) \,d\gamma _N (\xx) \\
    &\leq  2\psi^{-1}(1)\sum _{i=0}^{2^N-1} W_\psi(\mu_{t^{i}},\mu_{t^{i+1}}) \\
	&\leq  2\psi^{-1}(1)\sum _{i=0}^{2^N-1} \frac{1}{2^N} = 2\psi^{-1}(1)
\end{aligned}
\end{equation}
and \eqref{terzobound} follows.

Then, by Prokhorov's Theorem, there exist $\eta \in \Prob{\cM(I;X)}$ and a subsequence $N_n$ such that $\eta_{N_n}\rightarrow \eta$
narrowly in $\Prob{\cM(I;X)}$ as $n \rightarrow +\infty.$

{\bf Step 2. ($\eta$ is concentrated on BV right continuous curves)}
%
We apply Lemma \ref{lemma:Kur} in order to show that $\eta$-a.e. $u\in \supp \eta$ has a right continuous BV representative.

Given a curve $u:[a,b]\to X$, we denote by
$\pV(u,[a,b])=\sup\{\sum_{i=1}^{n}\sfd(u(t_i),u(t_{i+1})):a=t_1<t_2<\ldots<t_n<t_{n+1}=b\}$ its pointwise variation
and by
$\eV(u,[a,b])=\inf\{\pV(w,[a,b]):w(t)=u(t) \text{ for a.e. }t\in (a,b)\}$ its essential variation.

We define $F_N:\cM(I;X)\to [0,+\infty)$ by
\begin{equation}\label{defFN}
  F_N(u)= \begin{cases} \eV(u,I) & \qquad \text{if }u\in\supp \eta_N,\\
                        0 & \qquad \text{if }u\not\in\supp \eta_N.
   \end{cases}
\end{equation}
If $u$ is a.e. equal to $\sigma_\sxx$ then $\eV(u,I)=\pV(\sigma_\sxx,I)$.
Taking into account this equality, the proof of bound \eqref{firstbound} shows that
\begin{equation}\label{boundFN}
  \sup_{N\in\N}\int_{\cM(I;X)} F_N(u)\,d\eta_N(u) <+\infty.
\end{equation}
Since $F_N\geq 0$ by definition, we apply Lemma \ref{lemma:Kur} with the choice $Y=\cM(I;X)$ and $\mu_n=\eta_{N_n}$.
We still denote by $\eta_{N_n}$ the subsequence of $\eta_{N_n}$ given by Lemma \ref{lemma:Kur}.
Let $u\in\supp(\eta)$ be such that \eqref{pointwisebound} holds and we denote by $u_{N_n}\in\supp(\eta_{N_n})$ such that
$u_{N_n}\to u$ in $\cM(I;X)$ and $C$ a constant independent of $n$ such that
\begin{equation}\label{boundFN2}
 F_{N_n}(u_{N_n}) \leq C.
\end{equation}
Moreover, up to extract a further subsequence, we can also assume that $u_{N_n}(t)\to u(t)$ with respect to the distance $\delta$ for a.e. $t\in I$.
Since $u_{N_n}\in\supp(\eta_{N_n})$ we can choose the piecewise constant right continuous representative of $u_{N_n}$,
still denoted by $u_{N_n}$.
From \eqref{boundFN2} we obtain that
\begin{equation}\label{bound1}
 \eV(u_{N_n})=\pV(u_{N_n}) \leq C.
\end{equation}
Defining the increasing functions $v_n:I\to\R$ by $v_n(t)=\pV(u_{N_n},[0,t])$, from the Helly theorem,
up to extract a further subsequence still denoted by $v_n$, there exists an increasing function $v:I\to\R$
such that $v_n(t)$ converges to $v(t)$ for every $t\in I$ (we observe that for \eqref{bound1} $v\leq C$).
Since the set of discontinuity points of $v$ is at most countable we can redefine a right continuous function
$\bar v$ by $\bar v(t)=\lim_{s\to t^+}v(t)$.
Since
\begin{equation}\label{mod1}
 \sfd(u_{N_n}(t),u_{N_n}(s))\leq v_n(s)-v_n(t) \qquad \forall\; t,s\in I, \quad t\leq s,
\end{equation}
from the property \eqref{lscd} it follows that
\begin{equation}\label{mod2}
 \sfd(u(t),u(s))\leq \bar v(s)-\bar v(t) \qquad \text{for a.e. } t,s\in I, \quad t\leq s.
\end{equation}
Since $(X,\sfd)$ is complete, by \eqref{mod2} we can choose the representative of $u$,
$\bar u:I\to X$ defined by $\bar u(t)=\lim_{s\to t^+}u(t)$, which is right continuous by \eqref{mod2}.

We have just proved that $\eta$-a.e. $u\in\supp \eta$
is equivalent (with respect to the a.e. equality) to a $\sfd$-right continuous function with pointwise $\sfd$-bounded variation,
continuous at every points except at most a countable set.

%
{\bf Step 3. (Proof of (i))}

Since we want to apply Lemma \ref{lemma:dq},
we prove that
\begin{equation}\label{sob}
   \sup_{h\in (0,1)} \left\|\frac{\sfd(u(\cdot+h),u(\cdot))}{h}\right\|_{L^\psi(0,1-h)} <+\infty, \quad \mbox{ for } \eta-\mbox{a.e. } u \in \cM(I;X).
\end{equation}

Let us define the sequence of lower semi continuous functions $f_N:\cM(I;X) \to [0,+\infty]$ by
\begin{equation*}
	f_N(u):=\sup_{1/2^N \leq h <1} \int_0^{1-h}\psi\Big(\frac{\sfd(u(t+h),u(t))}{2h}\Big)\,dt,
\end{equation*}
that satisfies the monotonicity property
\begin{equation}\label{mon}
	f_N(u)\leq f_{N+1}(u) \qquad \forall \, u \in \cM(I;X).
\end{equation}
For $h\in[2^{-N},1)$, and $u\in\supp(\eta_N)$, by the monotonicity of $\psi$, the discrete Jensen's inequality and
taking into account that $(k(h)+1)/(2h)\leq 2^N$, we have that
\begin{align*}
    & \int_{0}^{1-h}\psi\Big(\frac{\sfd(u(t+h),u(t))}{2h}\Big)\,dt  \\
    & = \int_{0}^{1-h}\psi\Big(\frac{\sfd(x_{k(t+h)},x_{k(t)})}{2h}\Big)\,dt \\
    & = \int_{0}^{1-t^{k(h)}}\psi\Big(\frac{\sfd(x_{k(t+h)},x_{k(t)})}{2h}\Big)\,dt \\
    &\leq \int_{0}^{1-t^{k(h)}}\psi\Big(\frac{1}{k(h)+1}\sum_{i=0}^{k(h)}\frac{k(h)+1}{2h}\sfd(x_{k(t)+i+1},x_{k(t)+i})\Big)\,dt \\
    &\leq \int_{0}^{1-t^{k(h)}}\frac{1}{k(h)+1}\sum_{i=0}^{k(h)}\psi\Big(\frac{k(h)+1}{2h}\sfd(x_{k(t)+i+1},x_{k(t)+i})\Big)\,dt \\
    & = \sum_{j=0}^{2^N-k(h)-1} 2^{-N}\frac{1}{k(h)+1}\sum_{i=0}^{k(h)}\psi\Big(2^N\sfd(x_{j+i+1},x_{j+i})\Big) \\
    & \leq \sum_{j=0}^{2^N-1} 2^{-N} \psi\Big(2^N\sfd(x_{j+1},x_{j})\Big).
\end{align*}
It follows that
$$ f_N(u) \leq \sum_{j=0}^{2^N-1} 2^{-N} \psi\Big(2^N\sfd(x_{j+1},x_{j})\Big) $$
for every $u \in\supp(\eta_N)$.
Integrating the last inequality, taking into account that $W_\psi(\mu_{t^j},\mu_{t^{j+1}})\leq 2^{-N}$ and
$$\int_{\XX_N}\psi\Big(\frac{\sfd(x_{j+1},x_{j})}{W_\psi(\mu_{t^{j+1}},\mu_{t^{j}})}\Big) \, d\gamma_N(\xx) \leq 1,$$
we obtain that
\begin{align*}
    \int_{\cM(I;X)}f_N(u) \, d\eta_N (u)
   & \leq \sum_{j=0}^{2^N-1} 2^{-N} \int_{\XX_N}\psi\Big(2^N\sfd(x_{j+1},x_{j})\Big)\, d\gamma_N(\xx)\\
   & \leq \sum_{j=0}^{2^N-1} 2^{-N} \int_{\XX_N}\psi\Big(\frac{\sfd(x_{j+1},x_{j})}{W_\psi(\mu_{t^{j+1}},\mu_{t^{j}})}\Big)\, d\gamma_N(\xx) \leq 1.
\end{align*}

The lower semi continuity of $f_N,$
the monotonicity \eqref{mon} of $f_N$ and the last inequality yield
\begin{equation*}
	\int_{\cM(I;X)}f_N(u) \, d\eta (u) \leq 1  \qquad \forall \, N \in \N,
\end{equation*}
and consequently, by monotone convergence Theorem, we have that
$$\int_{\cM(I;X)} \sup_{N \in \N} f_N(u) \, d\eta (u) \leq 1,$$
and
\begin{equation}\label{conc}
		\sup_{N \in \N} f_N(u) <+\infty \quad \mbox{ for } \eta-\mbox{a.e. } u \in \cM(I;X).
\end{equation}

Since
$$ \sup_{N \in \N} f_N(u) = \sup_{0 < h <1} \int_0^{1-h} \psi\Big(\frac{\sfd(u(t+h),u(t))}{2h}\Big)\,dt ,$$
and
$\int_0^{1-h} \psi\Big(\frac{\sfd(u(t+h),u(t))}{2h}\Big)\,dt \leq C$ implies
$\left\|\frac{\sfd(u(\cdot+h),u(\cdot))}{h}\right\|_{L^\psi(0,1-h)}\leq \max\{C,1\}$
we obtain \eqref{sob}.

Finally, taking into account Step 2, we can associate to $\eta$-a.e. $u\in\supp\eta$ a right continuous representative $\bar u$, with
at most a countable points of discontinuity
satisfying \eqref{eq:sobolev}.
By Lemma \ref{lemma:dq} this representative belongs to $AC^\psi(I;(X,\sfd)).$

Defining the canonical immersion $T:C(I;X)\to \cM(I;X)$ and observing that it is continuous,
we define the new Borel probability measure $\tilde\eta\in\Prob{C(I;X)}$ by
$\tilde\eta(B)=\eta(T(B))$.
For the previous steps $\tilde \eta$ is concentrated on $AC^\psi(I;(X,\sfd)).$

{\bf Step 4. (Proof of (ii))}
In order to show (ii) we prove that for every $t\in I,$
\begin{equation}\label{etaeqmut}
	\int_{C(I;X)} \varphi(u(t))\,d\tilde\eta(u) =\int_X\varphi(x)\,d\mu_{t}(x)
			\qquad \forall \varphi \in C_b(X).
\end{equation}
Let $\varphi \in C_b(X)$. Since $g:I\to\R$ defined by
$$g(t):= \int_X \varphi (x) \, d\mu_{t}(x) $$
is uniformly continuous in $I$, we have that
the sequence of piecewise constant functions $g_N:I\to\R$ defined by
$$g_N(t):= g(t^i) = \int_X \varphi (x) \, d\mu_{t^i}(x) \qquad \mbox{if } t \in [t^i,t^{i+1}) ,$$
converges uniformly to $g$ in $I$ when $N \to +\infty$.
Then, for every test function $\zeta \in C_b(I),$ we have that
\begin{equation}
	\lim_{N \to +\infty} \int_0^1 \zeta(t) g_N(t)\,dt=\int_0^1 \zeta(t) g(t)\,dt.
\end{equation}
On the other hand
\begin{eqnarray*}
	\int_0^1 \zeta(t) g_N(t)\,dt &=& \int_0^1 \zeta(t) \int _{\cM(I;X)} \varphi (u(t)) \, d\eta_{N}(u) \,dt \\
	&=& \int _{\cM(I;X)} \int_0^1 \zeta(t) \varphi (u(t)) \,dt\, d\eta_{N}(u) .
\end{eqnarray*}
Since the map
$$u \mapsto \int_0^1 \zeta (t) \varphi (u(t)) \, dt$$ is continuous and bounded from $\cM(I;X)$ to $\R,$
then by the narrow convergence of $\eta_{N_n}$ we have
\begin{equation*}
	\lim_{n \to +\infty} \int _{\cM(I;X)} \int_0^1 \zeta (t) \varphi (u(t))\,dt\,d\eta_{N_n}(u)=
	\int _{\cM(I;X)} \int_0^1 \zeta(t)\varphi(u(t))\,dt\,d\eta(u).
\end{equation*}
By Fubini's Theorem and the definition of $\tilde\eta$
\begin{eqnarray*}
	\int_{\cM(I;X)}\int_0^1 \zeta(t) \varphi(u(t))\,dt\,d\eta(u) & = &
	\int_{C(I;X)}\int_0^1\zeta(t)\varphi(u(t))\,dt\,d\tilde\eta(u )\\
	 & = & \int_0^1\zeta(t)\int_{C(I;X)}\varphi(u(t))\,d\tilde\eta(u)\,dt.
\end{eqnarray*}
By the uniqueness of the limit then
$$ \int_0^1 \zeta (t) \int _{C(I;X)} \varphi (u (t)) \, d\tilde \eta (u ) \, dt =
	\int_0^1 \zeta (t) \int_X \varphi (x) \, d\mu_{t}(x) \, dt  \qquad \forall \zeta \in C_b(I) ,$$
from which
\begin{equation}\label{a.e.etetaeqmut}
	\int_{C(I;X)}\varphi(u(t))\,d\tilde\eta(u) = \int_X\varphi(x)\,d\mu_{t}(x)
			\qquad \mbox{for a.e. } t \in I.
\end{equation}
Since the applications $t \mapsto \int_X \varphi (x) \, d\mu_{t}(x)$ and
$t \mapsto \int_{C(I;X)} \varphi (u (t)) \, d\tilde \eta (u )$ are continuous,
(\ref{a.e.etetaeqmut}) is true for every $t \in I$ and (\ref{etaeqmut}) is proved.

{\bf Step 5. (Proof of (iii))}

First of all we check that for a.e. $t \in I,$ $|u '|(t)$ exists for $\tilde\eta-$a.e. $u \in C(I;X).$
We set $\Lambda :=\{(t,u) \in I \times C(I;X) : |u '|(t) \mbox{ does not exist}\}$.
$\Lambda$ is a Borel subset of $I \times C(I;X)$ since
the maps $G_h:I \times C(I;X)\to\R$ defined by $G_h(t,u):= \frac{d(u (t+h),u (t))}{|h|}$ are lower semi continuous
for every $h\not =0$, and $\Lambda=\{(t,u) \in I \times C(I;X) : \liminf_{h\to 0}G_h(t,u)<\limsup_{h\to 0}G_h(t,u)\}$.
Since $\tilde\eta$ is concentrated on $AC(I;(X,\sfd))$ curves, we have that for $\tilde\eta$-a.e. $u\in C(I;X),$
$\Leb^1(\{t \in I:(t,u) \in \Lambda \}) =0$  and
then Fubini's Theorem implies that for a.e. $t \in I,$  $\tilde\eta(\{u \in C(I;X):(t,u) \in \Lambda \})=0.$

Let $a,b \in I$ such that $a<b$ and let $h>0$ such that $b+h\in I$.
Recalling that $k(h)=[2^Nh]$, for every $N\in\N$ such that $2^{-N}\leq h$, by the monotonicity of $\psi$ and the discrete Jensen's inequality we have
\begin{align*}
    &\int_{\cM(I;X)} \int_{a}^{b}\psi\Big(\frac{k(h)}{k(h)+1}\frac{\sfd(u(t+h),u(t))}{h}\Big)\,dt \, d\eta_N(u) \\
    &\leq \int_{\XX_N} \int_{a}^{b}\psi\Big(\frac{k(h)}{k(h)+1}\frac{\sfd(x_{k(t+h)},x_{k(t)})}{h}\Big)\,dt \, d\gamma_N(\xx) \\
    &\leq\int_{\XX_N} \int_{a}^{b}\psi\Big(\frac{1}{k(h)+1}\sum_{i=0}^{k(h)}\frac{k(h)}{h}\sfd(x_{k(t)+i+1},x_{k(t)+i})\Big)\,dt \, d\gamma_N(\xx) \\
    &\leq\int_{\XX_N} \int_{a}^{b}\sum_{i=0}^{k(h)}\frac{1}{k(h)+1}\psi\Big(\frac{k(h)}{h}\sfd(x_{k(t)+i+1},x_{k(t)+i})\Big)\,dt \, d\gamma_N(\xx).
\end{align*}
Since $k(h)/h\leq 2^N$ and, by \eqref{hp:velcost}, $W_\psi(\mu_{t^k},\mu_{t^{k+1}})\leq 2^{-N}$, we have that
\begin{align*}
    & \int_{\XX_N} \int_{a}^{b}\sum_{i=0}^{k(h)}\frac{1}{k(h)+1}\psi\Big(\frac{k(h)}{h}\sfd(x_{k(t)+i+1},x_{k(t)+i})\Big)\,dt \, d\gamma_N(\xx) \\
    & \leq \int_{\XX_N} \int_{a}^{b}\sum_{i=0}^{k(h)}\frac{1}{k(h)+1}\psi\Big(2^N\sfd(x_{k(t)+i+1},x_{k(t)+i})\Big)\,dt \, d\gamma_N(\xx) \\
    & \leq \int_{a}^{b}\sum_{i=0}^{k(h)}\frac{1}{k(h)+1}\int_{\XX_N}
            \psi\Big(\frac{\sfd(x_{k(t)+i+1},x_{k(t)+i})}{W_\psi(\mu_{t^{k(t)+i+1}},\mu_{t^{k(t)+i}})}\Big) \, d\gamma_N(\xx) \,dt \leq b-a ,
\end{align*}
where we used the inequality
$$\int_{\XX_N}\psi\Big(\frac{\sfd(x_{k(t)+i+1},x_{k(t)+i})}{W_\psi(\mu_{t^{k(t)+i+1}},\mu_{t^{k(t)+i}})}\Big) \, d\gamma_N(\xx) \leq 1.$$
It follows that
\begin{equation*}
    \int_{\cM(I;X)}\frac{1}{b-a} \int_{a}^{b}\psi\Big(\frac{k(h)}{k(h)+1}\frac{\sfd(u(t+h),u(t))}{h}\Big)\,dt \, d\eta_N(u) \leq 1
\end{equation*}
and then, passing to the limit along the sequence $\eta_{N_n}$,
\begin{equation*}
    \int_{C(I;X)}\frac{1}{b-a} \int_{a}^{b}\psi\Big(\frac{\sfd(u(t+h),u(t))}{h}\Big)\,dt \, d\tilde\eta(u) \leq 1.
\end{equation*}
Taking into account (i), Fubini's Theorem and Lebesgue differentiation Theorem we obtain
\begin{equation}\label{ine}
    \int_{C(I;X)}\psi\Big(|u'|(t)\Big) \, d\tilde\eta(u) \leq 1 \qquad \text{for a.e. }t\in I
\end{equation}
and this shows that
$$\||u'|(t)\|_{L^\psi_{\tilde\eta}(C(I;X))} \leq 1 = |\mu'|(t) \qquad \text{for a.e. }t\in I.$$

{\bf Step 6. (Conclusion)}
Finally we have to remove the assumption \eqref{hp:velcost}.
Let $\mu \in AC^{\psi}(I;(\Prob{X},W_\psi))$ with length $L:=\int_0^T |\mu'|(t)\,dt$.

If $L=0$, then $\mu_t=\mu_0$ for every $t\in I$ and $\mu$ is represented by $\eta:= \sigma_{\#}\mu_0$,
where $\sigma:X\to C(I;X)$ denotes the function $\sigma(x)=c_x$, $c_x(t):=x$ for every $t\in I$.

When $L>0$ we can reparametrize $\mu$ by its arc-length (see Lemma 1.1.4(b) of \cite{ags} for the details).
We define the increasing function $\ss:I\to[0,L]$ by $\ss(t):=\int_0^t|\mu'|(r)\,dr$ observing that
$\ss$ is absolutely continuous with pointwise derivative
\begin{equation}\label{aeder}
\ss'(t)=|\mu'|(t) \qquad \text{for a.e. }t\in I.
\end{equation}
Defining $\ss^{-1}:I\to[0,L]$ by $\ss^{-1}(s)=\min\{t\in I: \ss(t)=s\}$ it is easy to check that
the new curve $\hat\mu:[0,L]\to\Prob{X}$ defined by $\hat\mu_s=\mu_{\sss^{-1}(s)}$ satisfies
$|\hat\mu'|(s)=1$ for a.e. $s\in [0,L]$ and $\mu_t=\hat\mu_{\sss(t)}$.
By the previous steps, we represent $\hat\mu$ by a measure $\hat\eta$ concentrated on $AC^{\psi}([0,L];(X,\sfd))$.
Denoting by $F:C([0,L];X)\to C(I;X)$ the map defined by $F(\hat u)=\hat u\circ\ss$, we represent $\mu$ by $\eta:=F_\#\hat\eta$.
Clearly $(e_t)_\#\eta=(e_t\circ F)_\#\hat\eta = \hat\mu_{\sss(t)} = \mu_t$.
Moreover, $\eta$ is concentrated on curves $u$ of the form $u(t)=\hat u(\ss(t))$ with $\hat u \in AC^{\psi}([0,L];(X,\sfd))$.
Since $\ss$ is monotone and $AC(I;\R)$ and $\hat u$ is $AC([0,L];(X,\sfd))$ then $\hat u\circ\ss$ is $AC(I;(X,\sfd))$,
and the metric derivative satisfies
\begin{equation}\label{mdc}
|u'|(t)\leq |\hat u'|(\ss(t))\ss'(t) \qquad \text{for a.e. }t\in I.
\end{equation}

Let $t\in I$ such that $\ss'(t)$ and $|\mu'|(t)$ exist and $\ss'(t)=|\mu'|(t)>0$.
Taking into account \eqref{ine} and Jensen's inequality we have for $h>0$
\begin{align*}
    \int_{C(I;X)}\psi\Big(\frac{\sfd(u(t+h),u(t))}{\ss(t+h)-\ss(t)}\Big) \, d\eta(u)
    & = \int_{C([0,L];X)}\psi\Big(\frac{\sfd(\hat u(\ss(t+h)),u(\ss(t)))}{\ss(t+h)-\ss(t)}\Big) \, d\hat\eta(\hat u) \\
    & \leq \int_{C([0,L];X)}\psi\Big(\frac{1}{\ss(t+h)-\ss(t)}\int_{\ss(t)}^{\ss(t+h)} |\hat u'|(r)\,dr \Big) \, d\hat\eta(\hat u) \\
    & \leq \frac{1}{\ss(t+h)-\ss(t)}\int_{\ss(t)}^{\ss(t+h)} \int_{C([0,L];X)}\psi\Big( |\hat u'|(r) \Big)\,d\hat\eta(\hat u) \,dr
    \leq 1.
\end{align*}
By Fatou's lemma, taking into account that $\eta$ is concentrated on $AC(I;(X,\sfd))$ curves, we obtain the inequality
\begin{equation}\label{md}
    \int_{C(I;X)}\psi\Big(\frac{|u'|(t)}{|\mu'|(t)}\Big) \, d\eta(u) \leq 1 .
\end{equation}

On the other hand, if $|\mu'|(t)=0$ on a set $J\subset I$ of positive measure, then for $\eta$-a.e. $u$ we have
$|u'|(t)=0$ for a.e. $t\in J$ because of the inequality \eqref{mdc}. 
Taking into account this observation and \eqref{md} we obtain the inequality
\begin{equation}\label{ineq}
 \||u'|(t)\|_{L^\psi_{\eta}(C(I;X))} \leq |\mu'|(t), \qquad \text{for a.e. }t\in I.
\end{equation}

We prove that $\eta$ is concentrated on $AC^{\psi}(I;(X,\sfd))$.
For every $v\in L^{\psi^*}(I)$, $v\geq 0$, $\|v\|_{L^{\psi^*}(I)}\leq 1$, from \eqref{ineq}
we have that
$$ \int_I \||u'|(t)\|_{L^\psi_{\eta}(C(I;X))}v(t) \,dt \leq \int_I|\mu'|(t)v(t)\,dt. $$
By the inequality \eqref{duality} it follows that,
for every $w\in L^{\psi^*}_{\eta}(C(I;X))$, $w\geq 0$, $\|w\|_{L^{\psi^*}_{\eta}(C(I;X))}\leq 1$,
$$ \int_I \int_{C(I;X)} |u'|(t)w(u)\,d\eta(u) v(t) \,dt \leq 4 \||\mu'|\|_{L^{\psi}(I)}. $$
By Fubini Theorem and \eqref{duality} we obtain that
$$  \| \||u'|(t)\|_{L^{\psi}(I)}\|_{L^\psi_{\eta}(C(I;X))} \leq 4 \||\mu'|\|_{L^{\psi}(I)}$$
and (i) holds.

In order to show the opposite inequality of \eqref{ineq}, we assume that
$t \in I$ is such that $|u'|(t)$ exists for $\eta-$a.e. $u\in C(I;X)$
and $\lambda_t:= \||u'|(t)\|_{L^\psi_\eta(C(I;X))}>0$.
We fix $\eps>0$. Since $\int_{C(I;X)} \psi\Big(\frac{|u'|(t)}{\lambda_t}\Big)\,d\eta(u)\leq 1$
and $\psi$ is strictly increasing on an interval of the form $(r_0,r_1)$ where $r_0\geq 0$, $r_1\leq+\infty$
and $\psi(r)=0$ for $r<r_0$, $\psi(r)=+\infty$ for $r>r_1$,
we have that $$\int_{C(I;X)} \psi\Big(\frac{|u'|(t)}{\lambda_t+\eps}\Big)\,d\eta(u)<1.$$
For $h>0$, let $\gamma_{t,t+h}:=(e_t,e_{t+h})_\#\eta$.
Taking into account that $\eta$ is
concentrated on $AC^{\psi}(I;(X,\sfd))$, we have
\begin{equation}\label{a}
\begin{aligned}
	\limsup_{h\to 0^+}\int_{X\times X} \psi\Big(\frac{\sfd(x,y)}{h(\lambda_t+\eps)}\Big)\, d\gamma_{t,t+h}(x,y)
    & = \limsup_{h\to 0^+} \int_{C(I;X)} \psi\Big(\frac{\sfd(u(t),u(t+h))}{h(\lambda_t+\eps)}\Big)\,d\eta(u) \\
        & \leq	\int_{C(I;X)} \limsup_{h\to 0^+} \psi\Big(\frac{\sfd(u(t),u(t+h))}{h(\lambda_t+\eps)}\Big)\,d\eta(u)  \\
		& = \int_{C(I;X)}\psi\Big(\frac{|u '|(r)}{\lambda_t+\eps}\Big)\,d\eta(u) < 1.
\end{aligned}
\end{equation}
Consequently there exists $\bar h$ (depending on $\eps$ and $t$) such that
\begin{equation*}\label{}
	\int_{X\times X} \psi\Big(\frac{\sfd(x,y)}{h(\lambda_t+\eps)}\Big)\, d\gamma_{t,t+h}(x,y) \leq 1 \qquad \forall h \in (0,\bar h).
\end{equation*}
Since $\gamma_{t,t+h} \in \admissibleplan{\mu_t}{\mu_{t+h}}$,
the last inequality  shows that
\begin{equation*}\label{aa}
	W_\psi(\mu_t,\mu_{t+h})\leq h(\lambda_t+\eps) \qquad \forall h \in (0,\bar h).
\end{equation*}
Finally, dividing by $h$ and passing to the limit for $h\to 0^+$ we obtain
\begin{equation*}\label{estimate}
		|\mu'|(t) \leq \||u'|(t)\|_{L^\psi_\eta(C(I;X))} \qquad \mbox{for a.e. } t \in I.
	\end{equation*}
\end{proof}

\begin{remark}
The following example shows that the assumptions on $\psi$ are necessary for the validity of Theorem \ref{th:curveAC}.

Since $\psi$ is convex, if \eqref{superlinearity} and \eqref{beatzero} are not satisfied there exist $a,b\in\R$ such that $0<a\leq b<+\infty$
and $at\leq \psi(t)\leq bt$ for every $t\geq 0$.
Then it holds $a W_1(\mu,\nu)\leq W_\psi(\mu,\nu)\leq b W_1(\mu,\nu)$, where $W_1$ denotes the distance $W_{\phi}$ for $\phi(t)=t$.
Given two distinct points $x_0,x_1\in X$, consider the curve $\mu:[0,1]\to \Prob{X}$ defined by $\mu_t=(1-t)\delta_{x_0}+t\delta_{x_1}$.
We observe that $\supp(\mu_t)=\{x_0,x_1\}$ for $t\in(0,1)$ and $\supp(\mu_i)=\{x_i\}$ for $i=0,1$.
Clearly $\mu$ is Lipschitz with respect to the distance $W_\psi$ and in particular $\mu\in AC^{\psi}(I;X)$.
If there is a measure $\eta$ satisfying properties (i) and (ii) of Theorem \ref{th:curveAC},
then for $\eta$-a.e. $u$ there holds $u(i)=x_i$ for $i=0,1$ and $u(t)\in \{x_0,x_1\}$ for every $t\in(0,1)$
and $u$ cannot be continuous.
\end{remark}

\section{Geodesics in $(\Prob{(X,\sfd)},W_\psi)$}\label{sec:geodesics}

We apply Theorem \ref{th:curveAC} in order to characterize the geodesics of
the metric space $(\Prob{X},W_\psi)$ in terms of the geodesics of the space $(X,\sfd)$.

In this section $I$ denotes the unitary interval $[0,1].$

We say that $u:I\rightarrow X$ is a constant speed geodesic in $(X,\sfd)$ if
\begin{equation}\label{constspeed}
	\sfd(u(t),u(s))=|t-s|\sfd(u(0),u(1)) \qquad \forall s,t \in I.
\end{equation}


We define the set $$G(X,\sfd):=\{u:I\to X: u \text{ is a  constant speed geodesic of }(X,\sfd)\}.$$

\begin{proposition}
Let $(X,\tau,\sfd)$ be an extended Polish space and $\psi$ be satisfying \eqref{hp:psi}.
If $\eta \in \Prob{C(I;X)}$ is concentrated on $G(X,\sfd)$ and $\gamma_{0,1}:=(e_0,e_1)_\#\eta\in\optimalplan{(e_0)_\#\eta}{(e_1)_\#\eta}$,
then the curve $\mu:I\to \Prob{X}$ defined by $\mu_t=(e_t)_\#\eta$ is a constant speed geodesic in $(\Prob{X},W_\psi)$.
\end{proposition}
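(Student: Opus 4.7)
The plan is to show the two inequalities $W_\psi(\mu_s,\mu_t)\le |t-s|W_\psi(\mu_0,\mu_1)$ and $W_\psi(\mu_s,\mu_t)\ge |t-s|W_\psi(\mu_0,\mu_1)$ separately; as usual for constant speed geodesics in metric spaces, the first requires a genuine construction whereas the second is a consequence of the triangle inequality.

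For the upper bound I would use $\gamma_{s,t}:=(e_s,e_t)_\#\eta\in\admissibleplan{\mu_s}{\mu_t}$ as a candidate transport plan. Write $L:=W_\psi(\mu_0,\mu_1)$ and assume $0<L<+\infty$ (the cases $L=0$ and $L=+\infty$ being trivial). Using the change of variable \eqref{changevariable} and the fact that $\eta$ is concentrated on $G(X,\sfd)$, so that $\sfd(u(s),u(t))=|t-s|\sfd(u(0),u(1))$ for $\eta$-a.e.\ $u$, I would compute
\begin{align*}
\int_{X\times X}\psi\Big(\frac{\sfd(x,y)}{|t-s|L}\Big)\,d\gamma_{s,t}(x,y)
&=\int_{C(I;X)}\psi\Big(\frac{\sfd(u(s),u(t))}{|t-s|L}\Big)\,d\eta(u)\\
&=\int_{C(I;X)}\psi\Big(\frac{\sfd(u(0),u(1))}{L}\Big)\,d\eta(u)\\
&=\int_{X\times X}\psi\Big(\frac{\sfd(x,y)}{L}\Big)\,d\gamma_{0,1}(x,y)\le 1,
\end{align*}
where in the last inequality I used the assumed optimality of $\gamma_{0,1}$ together with \eqref{eq:wopt}. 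By the definition of $W_\psi$ in \eqref{def:WassersteinOrlicz} this yields $W_\psi(\mu_s,\mu_t)\le |t-s|L$.

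For the lower bound, for $0\le s\le t\le 1$ the triangle inequality in the extended metric space $(\Prob{X},W_\psi)$ combined with the upper bound just proved gives
\begin{equation*}
L=W_\psi(\mu_0,\mu_1)\le W_\psi(\mu_0,\mu_s)+W_\psi(\mu_s,\mu_t)+W_\psi(\mu_t,\mu_1)\le sL+W_\psi(\mu_s,\mu_t)+(1-t)L,
\end{equation*}
hence $W_\psi(\mu_s,\mu_t)\ge (t-s)L$, which matches the upper bound.

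I do not expect any significant obstacle: the argument is essentially the classical Ambrosio-Gigli-Savaré construction of Wasserstein geodesics via plans on the path space, adapted to the Orlicz setting. The only point that needs a small amount of care is the homogeneity-free manipulation of the Orlicz functional $\psi$ in the upper bound; here the constant speed property is used precisely to replace $\sfd(u(s),u(t))/(|t-s|L)$ by $\sfd(u(0),u(1))/L$ pointwise, so that $\psi$ is evaluated at exactly the same arguments as in the optimality condition \eqref{eq:wopt} for $\gamma_{0,1}$, avoiding any need for a scaling property of $\psi$.
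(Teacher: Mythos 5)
Your proposal is correct and follows essentially the same route as the paper: use the optimality condition \eqref{eq:wopt} for $\gamma_{0,1}$, transfer it to $\gamma_{s,t}$ via the pointwise identity $\sfd(u(s),u(t))=|t-s|\sfd(u(0),u(1))$ on $G(X,\sfd)$ to get $W_\psi(\mu_s,\mu_t)\le|t-s|W_\psi(\mu_0,\mu_1)$, and then upgrade to equality by the triangle inequality. The only difference is cosmetic (you spell out the triangle-inequality step and the degenerate cases explicitly).
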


\begin{proof}
Since $\gamma_{0,1}:=(e_0,e_1)_\#\eta\in\optimalplan{\mu_0}{\mu_1}$,
the following inequality holds
\begin{equation}\label{in:60}
\int_{X\times X} \psi\Big(\frac{\sfd(x,y)}{W_\psi(\mu_0,\mu_1)}\Big)\, d\gamma_{0,1}(x,y) \leq 1.
\end{equation}
Since $\eta$ is concentrated on constant speed geodesics and $\gamma_{s,t}:=(e_s,e_t)_\#\eta\in\admissibleplan{\mu_s}{\mu_t}$ we have,
for every $t,s\in I$, $t\not=s$.
\begin{equation}\label{in:61}
\begin{aligned}
\int_{X\times X} \psi\Big(\frac{\sfd(x,y)}{W_\psi(\mu_0,\mu_1)}\Big)\, d\gamma_{0,1}(x,y)
 &= \int_{C(I;X)} \psi\Big(\frac{\sfd(u(0),u(1))}{W_\psi(\mu_0,\mu_1)}\Big)\,d\eta(u) \\
&= \int_{C(I;X)} \psi\Big(\frac{\sfd(u(t),u(s))}{|t-s|W_\psi(\mu_0,\mu_1)}\Big)\,d\eta(u) \\
&= \int_{X\times X} \psi\Big(\frac{\sfd(x,y)}{|t-s|W_\psi(\mu_0,\mu_1)}\Big)\, d\gamma_{t,s}(x,y).
\end{aligned}
\end{equation}
From \eqref{in:60} and \eqref{in:61} it follows that
\begin{equation}\label{ingeod}
    W_\psi(\mu_t,\mu_s)\leq |t-s|W_\psi(\mu_0,\mu_1) \qquad \forall s,t \in I.
\end{equation}
By the triangular inequality we conclude that equality holds in \eqref{ingeod}.
\end{proof}

\begin{theorem}
Let $(X,\tau,\sfd)$ be an extended Polish space and $\psi$ be satisfying \eqref{hp:psi}, \eqref{superlinearity} and \eqref{beatzero}.
Let $\mu:I\to \Prob{X}$ be a constant speed geodesic in $(\Prob{X},W_\psi)$
and $\eta \in \Prob{C(I;X)}$ a measure representing $\mu$ in the sense that (i), (ii) and (iii) of Theorem \ref{th:curveAC} hold.
Then $\gamma_{s,t}:=(e_s,e_t)_\#\eta$ belongs to $\optimalplan{\mu_s}{\mu_t}$ for every $s,t\in I$.
If, in addition, $\psi$ is strictly convex and
\begin{equation}\label{equalitynorm}
    \int_{X\times X} \psi\Big(\frac{\sfd(x,y)}{W_\psi(\mu_0,\mu_1)}\Big)\, d\gamma_{0,1}(x,y) = 1 ,
\end{equation}
then $\eta$ is concentrated on $G(X,\sfd)$.
\end{theorem}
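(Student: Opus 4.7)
The plan is to combine property (iii) of Theorem \ref{th:curveAC}, which for a constant-speed geodesic $\mu$ reads $\||u'|(t)\|_{L^\psi_\eta(C(I;X))} = |\mu'|(t) = W_\psi(\mu_0,\mu_1)$ for a.e.\ $t$, with the absolute-continuity estimate $\sfd(u(s),u(t)) \leq \int_s^t |u'|(r)\,dr$ (holding $\eta$-a.s.\ by (i)) and Jensen's inequality. The pointwise chain driving both parts is, for $\eta$-a.e.\ $u$ and $s<t$ in $I$,
$$
\psi\!\left(\frac{\sfd(u(s),u(t))}{(t-s)W_\psi(\mu_0,\mu_1)}\right)
\;\leq\; \psi\!\left(\frac{1}{t-s}\int_s^t\!\frac{|u'|(r)}{W_\psi(\mu_0,\mu_1)}\,dr\right)
\;\leq\; \frac{1}{t-s}\int_s^t\!\psi\!\left(\frac{|u'|(r)}{W_\psi(\mu_0,\mu_1)}\right)dr,
$$
where the first step uses monotonicity of $\psi$ and the AC bound, and the second is Jensen.

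For the first assertion, I would integrate this chain against $\eta$, apply Fubini, and use that $\int\psi(|u'|(r)/W_\psi(\mu_0,\mu_1))\,d\eta\leq 1$ for a.e.\ $r$, which follows from (iii) and the definition of the Orlicz norm. Changing variables via $(e_s,e_t)_\#\eta = \gamma_{s,t}$ and using the constant-speed identity $(t-s)W_\psi(\mu_0,\mu_1)=W_\psi(\mu_s,\mu_t)$ yields $\int_{X\times X}\psi(\sfd(x,y)/W_\psi(\mu_s,\mu_t))\,d\gamma_{s,t}\leq 1$. Property (ii) ensures $\gamma_{s,t}\in\admissibleplan{\mu_s}{\mu_t}$, so \eqref{eq:wopt} gives $\gamma_{s,t}\in\optimalplan{\mu_s}{\mu_t}$.

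For the second assertion, I specialize to $s=0$, $t=1$ and integrate the chain against $\eta$ to obtain three integrals $A\leq B\leq C\leq 1$; hypothesis \eqref{equalitynorm} reads $A=1$, forcing $A=B=C=1$. Strict convexity of $\psi$, together with $\psi(0)=0$ and monotonicity, implies $\psi$ is strictly increasing on its finite range (otherwise $\psi$ would be affine on a nondegenerate interval). Hence $A=B$, combined with the pointwise bound $\sfd(u(0),u(1))\leq\int_0^1|u'|(r)\,dr$, forces $\sfd(u(0),u(1))=\int_0^1|u'|(r)\,dr$ for $\eta$-a.e.\ $u$, while $B=C$ is Jensen's equality, which under strict convexity forces $|u'|(\cdot)$ to be Lebesgue-a.e.\ constant on $I$ for $\eta$-a.e.\ $u$. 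Denoting this constant by $c_u=\sfd(u(0),u(1))$, the forward AC bound gives $\sfd(u(s),u(t))\leq (t-s)c_u$, and the triangle inequality $\sfd(u(0),u(1))\leq\sfd(u(0),u(s))+\sfd(u(s),u(t))+\sfd(u(t),u(1))$ together with the forward bounds on the outer pieces supplies the reverse inequality. Therefore $u$ is a constant-speed geodesic, i.e.\ $u\in G(X,\sfd)$.

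The only delicate point is the propagation of equality under strict convexity; no serious obstacle is expected once the strict monotonicity of $\psi$ on its finite range and the Jensen equality characterization are verified in the Orlicz setting.
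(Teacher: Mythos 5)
Your proposal is correct and follows essentially the same route as the paper's proof: property (iii) plus the a.e.\ bound $\sfd(u(s),u(t))\leq\int_s^t|u'|(r)\,dr$, Jensen's inequality and the monotonicity of $\psi$ give the modular bound $\int\psi(\sfd(x,y)/W_\psi(\mu_s,\mu_t))\,d\gamma_{s,t}\leq 1$, hence optimality via \eqref{eq:wopt}; and the equality chain under \eqref{equalitynorm} is propagated pointwise by strict convexity (Jensen equality forcing $|u'|$ constant) and strict monotonicity (forcing $\sfd(u(0),u(1))=\int_0^1|u'|$). Your explicit triangle-inequality argument at the end merely spells out a step the paper leaves implicit.
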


\begin{proof}
Let $L=W_\psi(\mu_0,\mu_1)$.
Since $\mu$ is a constant speed geodesic and (iii) of Theorem \ref{th:curveAC} holds
\begin{equation}\label{L}
    L = |\mu'|(r) =   \||u'|(r)\|_{L^\psi_{\eta}(C(I;X))} \qquad \mbox{for a.e. }r \in I.
\end{equation}
Let $t,s\in I$, $t\not=s$.
Since, by \eqref{L}, it holds
\begin{equation*}
    \frac{1}{t-s}\int_s^t\int_{C(I;X)} \psi\Big(\frac{|u'|(r)}{L}\Big)\,d\eta(u)\,dr \leq 1 ,
\end{equation*}
Fubini's theorem and Jensen's inequality yield
\begin{equation}\label{in:62}
    \int_{C(I;X)} \psi\Big(\frac{1}{t-s}\int_s^t\frac{|u'|(r)}{L}\,dr\Big)\,d\eta(u) \leq 1.
\end{equation}
By the monotonicity of $\psi$ and \eqref{in:62} we obtain
\begin{equation*}
    \int_{C(I;X)} \psi\Big(\frac{\sfd(u(s),u(t))}{|t-s|L}\Big)\,d\eta(u) \leq 1.
\end{equation*}
Since $|t-s|L= W_\psi(\mu_s,\mu_t)$ we have
\begin{equation}
    \int_{C(I;X)} \psi\Big(\frac{\sfd(u(s),u(t))}{W_\psi(\mu_s,\mu_t)}\Big)\,d\eta(u) \leq 1
\end{equation}
and, recalling \eqref{eq:wopt}, this shows that $\gamma_{s,t}$ is optimal.

Assuming \eqref{equalitynorm} and using Jensen's inequality we have
\begin{equation}\label{equality}
\begin{aligned}
    1 &= \int_{C(I;X)} \psi\Big(\frac{\sfd(u(0),u(1))}{L}\Big)\,d\eta(u)
    \leq \int_{C(I;X)} \psi\Big(\int_0^1\frac{|u'|(t)}{L}\,dt\Big)\,d\eta(u) \\
    &\leq \int_{C(I;X)} \int_0^1\psi\Big(\frac{|u'|(t)}{L}\Big)\,dt\,d\eta(u)
    = \int_0^1\int_{C(I;X)}\psi\Big(\frac{|u'|(t)}{L}\Big)\,d\eta(u)\,dt \leq 1.
\end{aligned}
\end{equation}
It follows that equality holds in \eqref{equality} and, still by Jensen's inequality, we have
\begin{equation}\label{eq:e}
\psi\Big(\int_0^1\frac{|u'|(t)}{L}\,dt\Big)=\int_0^1\psi\Big(\frac{|u'|(t)}{L}\Big)\,dt, \qquad \text{ for } \eta\text{-a.e. }u\in C(I;X) .
\end{equation}
The strict convexity of $\psi$ implies that, if $u$ satisfies the equality in \eqref{eq:e}, then $|u'|$ is constant, 
say $|u'|(t)=L_u$ for a.e. $t\in I$.
Analogously equality in \eqref{equality} shows that $\psi\Big(\frac{\sfd(u(0),u(1))}{L}\Big)= \psi\Big(\frac{L_u}{L}\Big)$ 
for  $\eta$-a.e. $u\in C(I;X)$.
The strict monotonicity of $\psi$ implies that $\sfd(u(0),u(1))= L_u$ and we conclude that $u\in G(X,\sfd)$ for  $\eta$-a.e. $u\in C(I;X)$.
\end{proof}

\section*{Acknowledgements}
The author would like to thank Luigi Ambrosio, Nicola Gigli and Giuseppe Savar\'e for helpful conversations on the topic of this paper.
The author is supported by a MIUR-PRIN 2010-2011 grant for the project “Calculus of Variations”.
The author is member of the GNAMPA group of the Istituto Nazionale di Alta Matematica (INdAM).

%
\end{document}